\documentclass[12pt]{amsart}
%
%
\usepackage{amssymb,latexsym,amscd}
\let\eqref\relax
\usepackage{amsmath}
\usepackage{amsfonts}
\def\eqref#1{(\ref{#1})}   
\def\Q{\mathbb Q}
\def\C{\mathbb C}

\def\Z{\mathbb Z}

\def\PSL{\mathop{\mathrm{PSL}}\nolimits}
\def\PGL{\mathop{\mathrm{PGL}}\nolimits}

\def\Aut{\mathop{\mathrm{Aut}}\nolimits}
\def\Stab{\mathop{\mathrm{Stab}}\nolimits}
\def\Core{\mathop{\mathrm{Core}}\nolimits}
\def\Gal{\mathop{\mathrm{Gal}}\nolimits}
\def\Id{\mathop{\mathrm{Id}}\nolimits}

\def\F{\mathbb{F}}
\def\O{\mathfrak{O}}
\def\Sym#1{\mathbf{S}_{#1}}
\def\Alt#1{\mathbf{A}_{#1}}
\def\pitem{\advance\labelsep3mm\item \advance\leftskip3mm\advance\linewidth-3mm}
\def\mitem{\advance\labelsep-3mm\item \advance\leftskip-3mm\advance\linewidth3mm}
\newtheorem{definition}{Definition}[section]
\newtheorem{corollary}[definition]{Corollary}
\newtheorem{lemma}[definition]{Lemma}

\newtheorem{remark}[definition]{Remark}
\newtheorem{theorem}[definition]{Theorem}
\newtheorem{example}[definition]{Example}
\newtheorem{algorithm}[definition]{Algorithm}
\title{Computation of Galois groups of rational polynomials}
\author{Claus Fieker}
\address{Fachbereich Mathematik, Universit\"at Kaiserslautern, Postfach 3049, 67653 Kaiserslautern}
\email{fieker@mathematik.uni-kl.de}

\author{J\"urgen Kl\"uners}
\address{Mathematisches Institut
der Universit\"at Paderborn, Warburger Str. 100, 33098 Paderborn, 
Germany}
\email{klueners@math.uni-paderborn.de}


\begin{document}
\maketitle
\begin{abstract}
  Computational Galois theory, in particular the problem of computing
  the Galois group of a given polynomial is a very old problem.
  Currently, the best algorithmic solution is Stauduhar's method.
  Computationally, one of the key challenges in the application of
  Stauduhar's method is to find, for a given pair of groups $H<G$ a
  $G$-relative $H$-invariant, that is a multivariate polynomial $F$
  that is $H$-invariant, but not $G$-invariant.  While generic,
  theoretical methods are known to find such $F$, in general they
  yield impractical answers. We give a general method for computing
  invariants of large degree which improves on previous known methods,
  as well as various special invariants that are derived from the
  structure of the groups.  We then apply our new invariants to the
  task of computing the Galois groups of polynomials over the rational
  numbers, resulting in the first practical degree independent
  algorithm.
\end{abstract}

\section{Introduction}
Computational Galois theory, in particular the problem of
finding the Galois group of a given polynomial is a very old problem.
While various algorithms have been published, so far they are either
impractical for groups of size $>1000$ due to the requirement of 
exact representation of an algebraic splitting field, or they are
degree dependent.
Algorithms of the first kind include for example the 
naive approach of constructing a splitting field by repeated
factorisation as well as more sophisticated methods \cite{yokoyama}.
Algorithms of the second kind fall broadly into two approaches:
a classical approach that aims to characterise the Galois group as an
abstract group by building a decision tree using certain indicators
(resolvent polynomials) \cite[Chapter 6.3]{Cohen1}
and a newer approach, by Stauduhar \cite{Stauduhar} where the Galois group is
constructed explicitly as a group of permutations of the roots of the
polynomial in question. Stauduhar's method roughly works by
traversing the lattice of (transitive) subgroups of the full
symmetric group from the top ($\Sym n$) down to the Galois
group of the polynomial. At each step, this is done through the
help of invariants and the high precision evaluation of those.

This paper naturally splits into two parts: the first discussing the
problem of finding a useful invariant for each pair of groups (see
Section \ref{Not} for a precise statement), and the second part
explaining how this is used to compute Galois groups of polynomials
over $\Q$, see Section \ref{sec7} for details.

Primitive invariants for permutation groups, i.e. multivariate
polynomials with a given stabiliser, are among the most important
objects in computational Galois theory. They are the central
ingredient in Stauduhar's method \cite{Gei2,GeKl} for the
determination of the Galois group of a polynomial $f$: given two
groups $H<G$ a ($G$-relative) $H$-invariant is used to decide if
$\Gal(f)\leq H^g$ for some $g\in G$ under the assumption that
$\Gal(f)\leq G$.  Furthermore, applications, such as the explicit
realization of Galois groups by explicitly computing defining
equations for subfields of the splitting field for $f$ rely on
invariants as well \cite{KlMa}.

While there are a few methods known for the computation of such invariants
in the literature, in applications, invariants were mostly
the result of ad-hoc methods. Generic algorithms, eg. 
\cite{TheFrenchGirl,girstmair}
for individual invariants or \cite{KemperSteel} for the computation
of the entire ring of invariants become rapidly unpractical for
larger degree permutation groups. 

It should be stressed that while invariant theory gives explicit invariants
for all pairs of groups $H<G$, the generic results tend to be 
impractical as the resulting invariants are computationally far too
complex.

In what follows, we will give a new, space-efficient algorithm to
compute all invariants of a given degree for arbitrary pairs of
groups, and for maximal subgroups of transitive groups we give several
constructions that allow the determination of efficient invariants in
many cases. We then demonstrate in Section \ref{special} that knowledge of
the subgroup structure can also be used to find efficient invariants,
as frequently invariants for some subgroups can be combined to give
invariants for others.

Finally, we demonstrate the efficiency and the limits of our methods by
considering several examples.

\section{Notation}\label{Not}
Transitive groups of degree $<32$ are denoted by $nTm$ where $n$ is the degree
and $m$ is the number of the group in the classification \cite{hulpke} used
by both Magma and Gap.
For the rest of the article, we fix some positive integer $n$.
The symmetric group on $n$ elements, $\Sym n$ acts on the polynomial
ring $\Z[\underline X] = \Z[X_1, \ldots, X_n]$ in $n$ variables via
$$X_i \mapsto X_{\sigma(i)}.$$
For $\sigma\in \Sym n$ we usually write $F^\sigma$ for the image under
this map.
A polynomial $F\in \Z[\underline X]$ is called a $H$-invariant (for some
group $H\leq \Sym n$) if $F^\sigma = F$ for all $\sigma \in H$.
Given two subgroups $H<G\leq \Sym n$, we call a polynomial 
$F\in \Z[\underline X]$ a $G$-relative $H$-invariant, if its 
stabiliser 
$\Stab_G F := \{\sigma \in G \mid F^\sigma = F\}$ in $G$ equals $H$.
A polynomial $F\in \Z[\underline X]$ is called an absolute $H$-invariant
if $\Stab_{\Sym n} F = H$.

For any subgroup $H\leq \Sym n$ we can consider the ring $\Z[\underline
X]^H$ of absolute $H$-invariants and also the invariant field
$\Q(\underline X)^H$ of rational functions that are invariant under
$H$.
\begin{remark}
If $H<G\leq \Sym n$ is a pair of subgroups and if $F\in \Z[\underline X]$
is a $G$-relative $H$-invariant, then
\begin{enumerate}
\item As an extension of fields, $\Q(\underline X)^H$ is a finite
  extension of $\Q(\underline X)^G$ of degree 
  $$[\Q(\underline X)^H : \Q(\underline X)^G] = (G:H)$$
\item Furthermore 
  $$\Q(\underline X)^H = \Q(\underline X)^G[F]$$
  that is, $F$ is a primitive element for the extension.
\item From the main theorem on symmetric functions it follows that
  $$\Z[\underline X]^{\Sym n} = \Z[s_1, \ldots, s_n]$$
  where $s_i = \sum\limits_{1\leq j_1<\cdots <j_i\leq n}
  \prod\limits_{\ell=1}^i X_{j_\ell}$ are the elementary symmetric
  functions.
\end{enumerate}
\end{remark}

%
%

\section{Stauduhar's method}

In this section we recall the necessary tools from Stauduhar's method.
We do this in a slightly more general context which has the advantage that we 
can combine the
information obtained by the resolvent method and by Stauduhar's method.

Let us assume that we are given a monic polynomial $f\in\Z[X]$ of
degree $n$ and we would like to compute the Galois group of $f$.
Certainly, the Galois group is a subgroup of $\Sym n$ acrting on the roots of $f$ and therefore we
can assume that we know a subgroup $G\leq \Sym n$ with $\Gal(f)\leq
G$. Assume furthermore that we have a proper subgroup $H<G$ and let
$F\in \Z[X_1, \ldots, X_n]$ be a $G$-relative $H$-invariant
polynomial. In the following we denote by $G//H$ a set of
representatives of right cosets $H \sigma$ of $G/H$. We remark that we only use right cosets in this paper. 
The following is proved in
\cite{Stauduhar}.
\begin{lemma}\label{specialStaud}
  Let $F$ be $G$-relative $H$-invariant and assume that $\Gal(f)\leq G$, where $\Gal(f)$ acts on
  the roots $\alpha_1,\ldots,\alpha_n$ in some fixed closure. Then
  $$R_F := \prod_{\sigma\in G//H} (T-F^\sigma(\alpha_1, \ldots, \alpha_n)) \in \Z[T].$$
  $R_F$ is called the relative resolvent polynomial (corresponding to $H<G$ and $F$).
\end{lemma}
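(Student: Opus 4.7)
The plan is to prove this in two stages. First, I would show that the polynomial
$$\tilde R_F(T,\underline X) := \prod_{\sigma\in G//H}(T - F^\sigma(\underline X))$$
is well-defined and has each of its $T$-coefficients in $\Z[\underline X]^G$; then specialising $X_i \mapsto \alpha_i$ and invoking Galois descent together with integrality of the roots will finish the job. Well-definedness is the first check: if $\sigma$ and $\sigma'$ represent the same right coset, then $\sigma' = h\sigma$ with $h\in H$, and the $H$-invariance of $F$ combined with the convention for the action of $\Sym n$ on $\Z[\underline X]$ gives $F^{\sigma'} = F^\sigma$, so each factor, and hence the full product, is independent of the chosen system of coset representatives.

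Next I would establish the $G$-invariance of the coefficients. The point is simply that the action of $G$ on the finite set $G//H$ (by multiplication) is a bijection from $G//H$ to itself; hence acting by any $g\in G$ on the product $\prod_{\sigma}(T - F^\sigma(\underline X))$ merely permutes its factors and leaves the product unchanged. Viewing $\tilde R_F$ as an element of $\Z[\underline X][T]$, this means every $T^i$-coefficient is a $G$-invariant polynomial in $\Z[\underline X]^G$.

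The final stage is to substitute $X_i \mapsto \alpha_i$. By hypothesis $\Gal(f) \le G$ via its action on $(\alpha_1,\ldots,\alpha_n)$, so the $G$-invariant $T^i$-coefficients become $\Gal(f)$-fixed elements of $\overline\Q$ after evaluation, and therefore lie in $\Q$. Since $f\in\Z[X]$ is monic, each $\alpha_i$ is an algebraic integer, so any $\Z$-polynomial expression in the $\alpha_i$ is again an algebraic integer; a rational algebraic integer lies in $\Z$, which yields $R_F \in \Z[T]$. The subtlest point in my view is keeping the left/right action and the right-coset conventions compatible (so that ``$H$-invariance of $F$'' really implies ``constancy of $F^\sigma$ on cosets''); the remaining arguments are just (i) the bijection of cosets under group multiplication and (ii) the standard principle that rational algebraic integers are ordinary integers.
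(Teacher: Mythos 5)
Your proposal is correct and follows essentially the same route as the paper's proof: well-definedness of the product via the $H$-invariance of $F$, $G$-invariance (hence $\Gal(f)$-invariance) of the coefficients because $G$ permutes the cosets, rationality by Galois descent, and integrality because the $\alpha_i$ are algebraic integers. The paper states this more tersely, but the content is identical, and your attention to the compatibility of the right-coset convention with the action is a reasonable extra care rather than a deviation.
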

\begin{proof}
  Since $F^H=F$ we see that $R_F$ does not depend on the choice
  of coset representatives. The polynomial $R_F$ is invariant
  under $G$ and since $\Gal(f)\leq G$ it is invariant under $\Gal(f)$. Therefore
  all coefficients of $R_F$ are in $\Q$ and also algebraic integers,
  thus in $\Z$.
\end{proof}
Suppose that $R_F$ is squarefree and we know a non-trivial factor of
$R_F$ in $\Z[T]$. In this situation we show in the following theorem that
the Galois group of $f$ is contained in a proper subgroup of $G$ and
therefore we make progress. In case $R_F$ is not squarefree, we apply
a Tschirnhausen transformation $t\in \Z[x]$ and compute a new
polynomial
$$R_{F, t} := \prod_{\sigma\in G//H} (T-F^\sigma(t(\alpha_1), \ldots, t(\alpha_n))).$$
It can be shown \cite{girstmair2} that there exist suitable
transformations $t$ such that $R_{F, t}$ is squarefree. Furthermore,
introducing $t$ amounts to a change of $f$ that will not affect the
Galois group.

\begin{theorem}\label{genStaud}
  In the situation of Lemma \ref{specialStaud}, assume that $R_F$ is squarefree and $A \in
  \Z[T]$ is a divisor of $R_F$, of degree $\deg A =m$.  Denote by
  $\rho: G \rightarrow \Sym{G/H}$ the permutation action on right
  cosets $G/H$. Then there exist $\sigma_1,\ldots,\sigma_m\in G$ such
  that
  $$A(T) = \prod_{i=1}^m \left(T-F^{\sigma_i}(\alpha_1,\ldots,\alpha_n)\right).$$
  Denote by $B$ the set of right cosets $\{H\sigma_i \mid 1\leq i \leq m\}$. 

  Then $\Gal(f)\leq \rho^{-1} \left(\Stab_{\rho(G)} (B)\right)$.
\end{theorem}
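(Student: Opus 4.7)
The plan is to prove the two claims of the theorem in sequence. For the factorisation of $A$, I would use that $R_F$ is squarefree by hypothesis, so its roots $\{F^\sigma(\alpha_1,\ldots,\alpha_n) : \sigma \in G//H\}$ are pairwise distinct. Since $A\in\Z[T]$ divides the monic polynomial $R_F$, each root of $A$ is a root of $R_F$, hence of the form $F^{\sigma_i}(\alpha_1,\ldots,\alpha_n)$ for some $\sigma_i\in G$, and $A$ is the product of the corresponding $m$ linear factors by a degree count. Because $F$ is $H$-invariant, the value $F^\sigma(\alpha_1,\ldots,\alpha_n)$ depends only on the right coset $H\sigma$, so $B = \{H\sigma_i \mid 1 \le i \le m\}$ is a well-defined subset of $G/H$ of size $m$, independent of the choice of representatives.

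For the main containment $\Gal(f)\leq \rho^{-1}(\Stab_{\rho(G)}(B))$, the key input is that $A\in\Z[T]$ is fixed coefficient-wise by $\Gal(f)$, so $\Gal(f)$ permutes the roots $\{F^{\sigma_i}(\alpha_1,\ldots,\alpha_n)\}_{i=1}^m$ of $A$. Pick $\tau\in\Gal(f)\leq G$, acting on roots via $\alpha_i\mapsto\alpha_{\tau(i)}$. A direct unwinding of the definitions gives
$$\tau\bigl(F^\sigma(\alpha_1,\ldots,\alpha_n)\bigr) = F(\alpha_{\tau(\sigma(1))},\ldots,\alpha_{\tau(\sigma(n))}) = F^{\sigma'}(\alpha_1,\ldots,\alpha_n),$$
where $\sigma'$ is the appropriate product of $\sigma$ and $\tau$ in $G$. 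The induced map $H\sigma\mapsto H\sigma'$ on the right cosets is precisely the action $\rho(\tau)$ of $\tau$ by right multiplication. Hence the fact that $\Gal(f)$ permutes the roots of $A$ translates to $\rho(\tau)$ permuting $B$, i.e.\ $\rho(\tau)\in\Stab_{\rho(G)}(B)$, and therefore $\tau\in\rho^{-1}(\Stab_{\rho(G)}(B))$, as required.

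The main obstacle I anticipate is bookkeeping of composition conventions: one must verify carefully that the Galois action on the values $F^\sigma(\alpha_1,\ldots,\alpha_n)$ induces exactly the right-multiplication action $\rho$ on the right cosets $H\sigma$ used in the statement, rather than its inverse or a left-coset variant. This hinges on the compatibility between the conventions $X_i\mapsto X_{\sigma(i)}$ defining $F^\sigma$, the chosen composition in $\Sym n$, and the $H$-invariance $F^{h\sigma}=F^\sigma$ that makes $F^\sigma(\alpha_1,\ldots,\alpha_n)$ descend to $G/H$. Once this identification is nailed down, the remainder of the argument is a formal consequence of the squarefreeness of $R_F$ and the $\Gal(f)$-invariance of the rational coefficients of $A$.
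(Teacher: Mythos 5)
Your proposal is correct and follows essentially the same route as the paper's own proof: squarefreeness forces the roots of $A$ to correspond to $m$ distinct right cosets, and the $\Gal(f)$-invariance of the integral coefficients of $A$ means $\Gal(f)$ permutes those roots, which under the identification of root values with cosets is exactly the statement that $\rho(\tau)$ stabilises $B$. The composition-convention bookkeeping you flag is handled in the paper simply by noting that $\tau\sigma_1,\ldots,\tau\sigma_r$ form another system of coset representatives and that each $\tau\sigma_i$ with $i\leq m$ must land in some $H\sigma_j$ with $j\leq m$; this is the same observation in slightly different clothing.
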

\begin{proof}
  The elements $\sigma_1,\ldots,\sigma_m$ are in pairwise different right cosets of
  $G/H$ since otherwise $F^{\sigma_i} = F^{\sigma_j}$
  and the polynomial $A$ is not squarefree. Extend the $\sigma_i$ to a
  complete system of representatives $\sigma_1,\ldots, \sigma_r$ of
  $G/H$, where $r=(G:H)$. Now let $\tau\in \Gal(f)\leq G$ be an arbitrary
  element. The elements $\tau\sigma_1,\ldots,\tau\sigma_r$ are also a set
  of representatives of $G/H$. Since $A$ is invariant under $\tau\in\Gal(f)$,
  for $1\leq i\leq m$ we have  $\tau\sigma_i \in H\sigma_j$ with $j\leq m$.
  Therefore we get that $\rho(\tau) \in \Stab_{\rho(G)} (B)$. 
\end{proof}
The case of linear factors in Theorem \ref{genStaud} was already
proved in \cite{Stauduhar}, in fact it formed the key technique in the
original paper. The possible use of quadratic factors is mentioned on
the last page of \cite{soicher} but is rejected there since the
practical group theory would have been too complicated. The general statement
is also proven in \cite[Satz 2.4]{Gei2}, although only the case of
linear factors is used to determine groups. Higher degree factors are
only considered in a verification step.  

We can also apply this theorem if we know more than one factor.
\begin{corollary}
  Assume that $R_F$ is squafree and factors as $R_F=A_1\cdots A_s$ with $A_i\in \Z[T]$.
  Denote by $B_i$ the set of right cosets of $G/H$ corresponding to $A_i$.
  Then 
  $$\Gal(f) \leq \bigcap_{i=1}^s \rho^{-1}(\Stab_{\rho(G)} B_i).$$ 
\end{corollary}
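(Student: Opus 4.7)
The plan is to reduce the corollary to $s$ separate applications of Theorem \ref{genStaud}, one per factor, and then intersect the resulting containments.

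First I would observe that since $R_F$ is squarefree, each factor $A_i \in \Z[T]$ is automatically squarefree and is a divisor of $R_F$, so the hypotheses of Theorem \ref{genStaud} are satisfied for every $A_i$. Applying that theorem to $A_i$ produces a set of right cosets $\widetilde{B}_i \subseteq G/H$ — namely those cosets $H\sigma$ for which $F^\sigma(\alpha_1,\ldots,\alpha_n)$ occurs as a root of $A_i$ — such that
$$\Gal(f) \leq \rho^{-1}\!\left(\Stab_{\rho(G)} \widetilde{B}_i\right).$$
I would then check that $\widetilde{B}_i$ coincides with the $B_i$ of the corollary statement: both are defined as the set of cosets corresponding to the roots of $A_i$ among the roots of $R_F$, and since $R_F$ is squarefree the bijection $H\sigma \leftrightarrow F^\sigma(\alpha_1,\ldots,\alpha_n)$ makes this identification unambiguous.

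Second, since the Galois group $\Gal(f)$ is a single fixed subgroup of $G$ and the containment above holds for every $i\in\{1,\ldots,s\}$ simultaneously, it must lie in the intersection
$$\Gal(f) \leq \bigcap_{i=1}^s \rho^{-1}\!\left(\Stab_{\rho(G)} B_i\right),$$
which is precisely the claim.

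There is essentially no obstacle here; the corollary is just the formal observation that running Theorem \ref{genStaud} on each factor of the complete factorisation of $R_F$ yields strictly more information than running it on a single factor. The only subtlety worth flagging is the consistency of the two definitions of $B_i$, and this is immediate from the squarefreeness of $R_F$ (which rules out the pathology of a single coset contributing roots to more than one $A_i$).
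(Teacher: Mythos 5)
Your argument is correct and is exactly the intended one: the paper states this corollary without proof as an immediate consequence of Theorem \ref{genStaud}, obtained by applying that theorem to each squarefree factor $A_i$ and intersecting the resulting containments. Your additional remark on the consistency of the coset sets $B_i$ is a reasonable (if unnecessary) sanity check and does not change the argument.
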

Because of its importance we describe the case of linear factors in more detail in the following corollary.
\begin{corollary}\label{StaudLin}
  Assume that $R_F$ is squarefree and has a linear factor in $\Z[T]$ corresponding to
  $H\sigma$ for $\sigma\in G$. Then $\Gal(f)\leq H^\sigma:=\sigma^{-1}H\sigma$.
\end{corollary}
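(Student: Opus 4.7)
The plan is to apply Theorem \ref{genStaud} directly in the special case $m=1$ and then identify the resulting stabiliser explicitly as a conjugate of $H$. Since $A$ is a linear factor of $R_F$, there is a single $\sigma_1 \in G$ so that $A(T) = T - F^{\sigma_1}(\alpha_1,\ldots,\alpha_n)$, and by hypothesis we may take $\sigma_1 = \sigma$, so $B = \{H\sigma\}$.

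Next I would compute $\Stab_{\rho(G)}(B)$ explicitly. An element $\tau \in G$ fixes the single coset $H\sigma$ under the right-multiplication action $\rho$ if and only if $H\sigma\tau = H\sigma$, which in turn is equivalent to $\sigma\tau\sigma^{-1} \in H$, i.e. $\tau \in \sigma^{-1}H\sigma = H^\sigma$. Thus $\rho^{-1}\bigl(\Stab_{\rho(G)}(B)\bigr) = H^\sigma$.

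Combining these two observations with the conclusion of Theorem \ref{genStaud} gives $\Gal(f) \leq H^\sigma$, as required. No step here is really a genuine obstacle: the content of the corollary is essentially unwinding notation in the singleton case of Theorem \ref{genStaud}. The only point where one has to be slightly careful is to ensure the correct conjugation convention in the stabiliser computation, since the paper uses right cosets and right action; writing out $H\sigma\tau = H\sigma$ explicitly makes it clear that the stabiliser is $\sigma^{-1}H\sigma$ rather than $\sigma H \sigma^{-1}$.
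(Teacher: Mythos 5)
Your proposal is correct and follows the same route as the paper: the paper's proof is simply the remark that a point stabiliser of $\rho(G)$ (acting on right cosets) is the conjugate $H^\sigma$, which is exactly the computation $H\sigma\tau = H\sigma \Leftrightarrow \tau\in\sigma^{-1}H\sigma$ that you carry out explicitly. Your version just unwinds the notation of Theorem \ref{genStaud} in the $m=1$ case with more detail, and the conjugation convention you check is indeed the only delicate point.
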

\begin{proof}
  Note that a point stabiliser of $\rho(G)$ is isomorphic to $H$.
\end{proof}
We remark that in the following we mostly use this Corollary since
finding linear factors is much easier than doing a complete
factorisation.  In particular, we are frequently able to find linear
factors without ever constructing $R_F$ completely. We remark that the 
complexity of this method depends on the index $(G:U)$. Even, if
we do not compute the corresponding resolvent polynomial of degree
$(G:U)$ directly, the coefficient bounds that we need in our algorithm
are dependent on this index, too.

If the index $(G:U)$ is huge it could be nice to work with a subgroup
$H$ of smaller index and use higher degree factors in order to prove
that $\Gal(f)$ is contained in a conjugate of $U$.

\begin{example}\label{largefactor}
  Let $f\in\Z[X]$ be a polynomial with Galois group 19T5 $\cong C_{19}
  \rtimes C_9$. This is a maximal subgroup of $\Alt{19}$ of index
  $17!$. Since $19T5$ is not 2-transitive, take
  $S:=\Stab_{\Alt{19}} ([1,2])$ the intersection of the point stabilisers of 1
  and 2, take $F := x_1-x_2$ and compute the resolvent $R_F$.  This is a
  polynomial of degree $19\cdot 18$ which has $\alpha_i-\alpha_j$ as roots
  for $1\leq i\ne j\leq 19$ (assuming that those roots are different). 
  Furthermore, using resultants, $R_F$ can be computed symbolically without
  explicit knowledge of $S$ or $F$.
  Factorisation of $R_F$ finds
  two factors of degree 171. When we apply Theorem \ref{genStaud} we directly
  descend to the correct Galois group.
\end{example} 
Let $\alpha$ be a root of the polynomial $f$ in the last example. The
factorisation approach of the last example is equivalent to the fact
that $f/(x-\alpha) \in\Q(\alpha)[x]$ factorises into two degree 9
factors.

Very often the factorisation approach is not optimal or even feasible
since the
degree of the resolvent polynomial is too high for efficient
factorisation.
In some situations our algorithm produces a Galois group (as
an actual permutation group on the roots) which is only correct with a very
high probability. In this situation we can turn to the factorisation
method in order to check our result, i.e. to give a proof that the
result is mathematically correct. Since we assume knowledge of the action on the
roots, it is not necessary to factor the resolvent polynomial. By
analysing the proof of Theorem \ref{genStaud} we can write down the
factor and check if it is in $\Z[X]$ and whether it divides the resolvent
polynomial, see Section \ref{check} for more details.
 Similar ideas were used by Casperson and McKay \cite{McKay} to 
obtain polynomials with Galois group $M_{11}$. 
\begin{example}\label{largefactor2}
  Let $p$ be a prime number, $G:=\Sym{p+1}$, and $H:=\PGL(2, p)\leq
  G$. Then $(G:H) = (p-2)!$ and $H$ is a maximal subgroup. Furthermore
  $G$ is sharply 3--transitive which means that for the resolvent
  method we have to use a polynomial acting on 4-sets of the roots
  which has degree $\binom{p+1}{4}$.  For p=19 this polynomial has
  degree 4845 and splits into four factors of degree 570, 855, 1710,
  and 1710 resp. In our implementation we compute the Galois group
  using short cosets (see Remark \ref{short-coset}) with a very high
  probability. By applying the methods of Section \ref{check} we use
  this group to approximate the degree 570 factor. When computing the
  corresponding stabiliser according to Theorem \ref{genStaud} we
  descend to $G$.
\end{example}
We remark that the group $\PSL(2,p)\leq \Alt{p+1}$ is not
3--transitive. The 3-set polynomial which is the resolvent corresponding
to $S:=\Stab_{\Alt{p+1}}(\{1,2,3\})$ gives no information since this
polynomial stays irreducible. But if we take
the pointwise stabiliser $S:=\Stab_{\Alt{p+1}} [1,2,3]$ (intersection
of the 3 pointwise stabilisers) then we will find 3 factors. The
latter polynomial has only degree $(p+1)p(p-1)$ compared to
$\binom{p+1}{4}$.

As a final example in this section we consider a degree 40 polynomial 
with Galois group PGsp(4,3). This polynomial was computed in \cite{DoDo}
and for reasons of space we do not give the actual
polynomial here. It comes from the 3-torsion of a hyperelliptic curve
  $$C: y^2 + (-x^2-1)y = x^5-x^4+x^3-x^2.$$ The Galois group is primitive and not 2--transitive.
Furthermore this group is maximal in $\Alt{40}$. The algorithm
outlined below, using only linear factors
computes the Galois group within 50 seconds. However, factoring
a suitable resolvent for the stabiliser of 2-sets, completes the
computation in only 20 seconds.

\section{Generic Invariants}\label{generic}
We fix two groups $H < G\leq \Sym n$ and assume unless explicitly
stated otherwise, that $H$ is a maximal subgroup of $G$. The aim of
this section is to find a $G$-relative $H$-invariant $F\in
\Z[\underline X]$ of small degree and a small number of terms. While
the first aim can be obtained easily, the second is more difficult,
and will be discussed later. To simplify notation we will write
$\sum A$ to mean $\sum_{a\in A} a$ for suitable sets $A$, usually
orbits.

The first observation is that it is always easy to write down some
invariant. Certainly, every $\Sym n$-relative invariant is $G$-relative as
well for $H\leq G\leq \Sym n$.
\begin{lemma}\label{genin}
$$F := \sum_{\sigma\in H}(\prod_{i=1}^{n-1} X_i^i)^\sigma$$
is a $\Sym n$-relative $H$-invariant.
\end{lemma}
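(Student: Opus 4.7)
The plan is to verify two claims: (i) $F$ is $H$-invariant, and (ii) no permutation in $\Sym n \setminus H$ fixes $F$. Together these give $\Stab_{\Sym n} F = H$, which is exactly the definition of a $\Sym n$-relative $H$-invariant. Claim (i) is automatic since $F$ is defined as a sum indexed by $H$: for $\tau\in H$, reindexing $\sigma\mapsto \sigma\tau^{-1}$ yields $F^\tau = F$.

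For claim (ii), the key technical ingredient I need is the following: the monomial $m := \prod_{i=1}^{n-1} X_i^i$ has trivial stabiliser in $\Sym n$. Here the exponent vector $(e_1,\ldots,e_n) = (1,2,\ldots,n-1,0)$ has pairwise distinct entries. For $\sigma\in\Sym n$ one computes $m^\sigma = \prod_{i=1}^n X_i^{e_{\sigma^{-1}(i)}}$, so $m^\sigma=m$ forces $e_{\sigma^{-1}(i)}=e_i$ for every $i$. Since the $e_i$ are distinct, this gives $\sigma=\Id$. Consequently the $H$-orbit $\{m^\sigma : \sigma\in H\}$ consists of exactly $|H|$ pairwise distinct monomials, and $F$ is their sum with all coefficients equal to one. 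In particular one can read off the support (and the orbit) directly from $F$.

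Now suppose $\tau\in\Sym n$ satisfies $F^\tau=F$. Then $\tau$ permutes the monomial support of $F$, which is the $H$-orbit of $m$. Hence $m^\tau = m^{\sigma_0}$ for some $\sigma_0\in H$, so $m^{\tau\sigma_0^{-1}}=m$. By the trivial-stabiliser fact, $\tau\sigma_0^{-1}=\Id$, whence $\tau=\sigma_0\in H$. Combining with (i) gives $\Stab_{\Sym n} F = H$, as required.

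The only non-routine point is the trivial-stabiliser statement for the staircase monomial $m$, and the definition of $m$ is tailored precisely so that this is forced: the exponents $0,1,\ldots,n-1$ on the $n$ variables are pairwise distinct, so any permutation preserving $m$ must fix every variable individually. Everything else (the $H$-invariance, the identification of the support of $F$ with an $H$-orbit, and the reduction of the stabiliser computation to a single monomial) is bookkeeping.
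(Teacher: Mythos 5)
Your proof is correct. The paper states Lemma \ref{genin} without any proof at all (it is treated as a well-known fact), so there is no argument to compare against; your write-up supplies exactly the standard justification one would expect: the staircase monomial $\prod_{i=1}^{n-1}X_i^i$ has pairwise distinct exponents $(1,\dots,n-1,0)$ and hence trivial stabiliser in $\Sym n$, the orbit sum over $H$ therefore has the $H$-orbit as its exact support with all coefficients $1$ (so no cancellation), and any $\tau$ fixing $F$ must send $m$ to some $m^{\sigma_0}$ with $\sigma_0\in H$, forcing $\tau=\sigma_0\in H$. All steps are sound.
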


While Lemma \ref{genin} proves the existence of $G$-relative $H$-invariants,
these are very expensive invariants from the point of view
of evaluation. Even assuming that the powers of the evaluation points 
are stored, the evaluation of each term needs $n-2$ multiplications, so
that in total $\#H(n-2)$ multiplications are necessary.
In order to improve on this we make use of the following well known 
facts (\cite{kemper}):
\begin{theorem}
For any polynomial $I\in \Z[\underline X]$, and every subgroup $H\leq \Sym n$ we have that
$F(\underline X):= \sum \{I^h(\underline X) \mid h\in H\}
 =: \sum I^H(\underline X)$ is $H$-invariant.

For every $H$-invariant polynomial $F\in \Q[\underline X]$,
there exist monomials
$m_i$ and coefficients $a_i\in \Q$ such that 
$$F = \sum_{i=1}^r a_i\sum m_i^H.$$
Thus invariants of the form $\sum m^H$ form a vector space
basis for the ring of all invariants.

The invariant ring $\Q[\underline X]^H$ of $H$-invariants
is a graded $\Q$-vector space. The dimensions
of the summands can be read of the Hilbert-series:
$$f_H(t) := \sum_{i=0}^\infty t^i \dim (R_H)_i$$
where $(R_H)_i = \{ r\in \Q[\underline X]^H \mid \deg r = i\} \cup \{0\}$.

  The Hilbert series can be computed from the knowledge of the set of the conjugacy classes $C$
  of $H$:
$$f_H(t) = \frac 1{\#H} \sum_{c\in C} \frac{\#c}{\prod_{i=1}^l(1-x^{c_i})^{d_i}},$$
where $(c_i, d_i)$ is the cycle structure of any representative of the class
$c$ of $H$.
\end{theorem}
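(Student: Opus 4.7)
The plan is to dispatch the four assertions in order, each by a standard invariant‑theoretic argument. For the first, given $g\in H$ one computes
$$F^g = \sum_{h\in H} (I^h)^g = \sum_{h\in H} I^{hg} = \sum_{h'\in H} I^{h'} = F,$$
since right multiplication by $g$ is a bijection of $H$. For the second, expand a polynomial $F$ as $F=\sum_m c_m m$ with $m$ running over monomials. The $H$-action permutes monomials, so if $F$ is $H$-invariant, the coefficient $c_m$ is constant on each $H$-orbit. Picking a representative $m_i$ from each orbit and noting that $\sum m_i^H=\sum_{h\in H}m_i^h=\#\Stab_H(m_i)\cdot\sum_{m\in Hm_i}m$, one obtains $F=\sum_i a_i\sum m_i^H$ with $a_i=c_{m_i}/\#\Stab_H(m_i)$. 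Expressions $\sum m_i^H$ for distinct orbits share no monomial, so they are linearly independent and form a basis.

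The grading claim is immediate: $\Sym n$ permutes the $X_i$ and hence preserves the degree of every monomial, so $\Q[\underline X]^H$ inherits the direct-sum decomposition of $\Q[\underline X]$ by total degree.

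The substance lies in the Molien-type formula. By the previous part, $\dim (R_H)_i$ equals the number of $H$-orbits on monomials of degree $i$, so the Cauchy--Frobenius (Burnside) lemma gives
$$\dim (R_H)_i = \frac{1}{\#H}\sum_{h\in H}\#\{m:\deg m=i,\ m^h=m\}.$$
A monomial $\prod_j X_j^{e_j}$ is fixed by $h$ precisely when the exponent vector $(e_j)$ is constant on each cycle of $h$. If $h$ has cycles of lengths $c_1,\ldots,c_l$ with multiplicities $d_1,\ldots,d_l$, such a choice amounts to picking one nonnegative integer per cycle; a single cycle of length $c$ contributes the generating factor $\sum_{e\geq 0}t^{ce}=(1-t^c)^{-1}$. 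Multiplying by $t^i$, summing over $i$, and then over $h\in H$ yields
$$f_H(t)=\frac{1}{\#H}\sum_{h\in H}\prod_{i=1}^{l}(1-t^{c_i(h)})^{-d_i(h)}.$$
Since cycle structure is a class function, collecting the inner sum by conjugacy classes $c\in C$ produces the stated identity.

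The main obstacle is the cycle-structure step: one has to see that fixed monomials correspond bijectively to integer-valued functions on the cycle set of $h$ and then recognise the resulting product generating series. Once this is in place, the remainder is bookkeeping, including the harmless rescaling between $\sum m^H$ and the plain orbit sum in the basis statement of part two.
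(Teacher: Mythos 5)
Your proof is correct. The paper offers no proof of this theorem at all --- it is stated as a collection of ``well known facts'' with a citation to Derksen--Kemper --- and your argument (averaging over the group for invariance, coefficients constant on monomial orbits for the basis claim, degree preservation for the grading, and Cauchy--Frobenius plus the per-cycle generating factor $(1-t^c)^{-1}$ for the Molien-type formula) is exactly the standard derivation that the cited reference supplies; the only point worth flagging is the one you already flag yourself, namely the factor $\#\Stab_H(m_i)$ distinguishing the sum over group elements from the orbit sum $\sum m^H$ as the paper defines it.
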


To improve on Lemma \ref{genin} we will try to find a small invariant
as a basis element for some $(R_H)_d$ for $d$ as small as possible.
Unfortunately, there are pairs of groups, $G=\Sym{n}$, $H=\Alt n $ for example,
where the invariant in Lemma \ref{genin} can be shown to be of minimal
degree.

In the remainder of this section we will develop methods to compute
a basis for $(R_H)_d$ the vector space of $H$-invariant
polynomials of degree $d$ and also for the subspace of $G$-relative 
polynomials. Our strategy will be to first compute a basis for the
$\Sym{n}$-invariants and then show how to refine this basis.
We start with few observations:
\begin{remark}
\begin{enumerate}
\item
Let $F\in \Z[\underline X]$ be a polynomial and $H\leq \Sym n$ be a group. Then
$$ \sum_{\sigma\in H//\Stab_H(F)} F^\sigma = \sum F^H$$
and thus is $H$-invariant.
\item Let $m=\prod_{i=1}^nX_i^{a_i}$ be a monomial. Then we have
$$\Stab_H(m) = \bigcap_{a\in\{a_i \mid 1\leq i\leq n\}} \Stab_H(\{i \mid a_i=a\}),$$
thus stabilisers of monomials can be computed as intersections of 
stabilisers of points or sets. Of course, for $H=\Sym{n}$ those stabilisers
can be made explicit as direct products of suitable $\Sym{m}$ for $m<n$.
\item Let $\{1, \ldots, n\} = \cup_{i=1}^r A_i$ be a partition. Then
 $$\Stab_{\Sym{n}}(A_1, \ldots, A_r) \cong \prod_{i=1}^r \Sym{A_i}.$$
\end{enumerate}
\end{remark}

\subsection{$\Sym{n}$-invariants}
In this section we will develop methods to compute a basis for the
$\Sym{n}$-invariants as well as indicate how to improve on
the general method if we want to aim for relative invariants only. The
algorithm presented here is similar to the ideas presented in
\cite{TheFrenchGirl,girstmair}.

The key idea here is that the orbit sum
$$\sum_{\sigma\in\Sym{n}//\Stab(f)} f^\sigma$$ 
does not depend on the representative $f$ of the full orbit $f^{\Sym{n}}$
and that the action of the group on some monomial $m$ 
only depends on the partition
of $\{1,\ldots, n\}$ induced by $m = \prod_{i=1}^n X_i^{a_i}$:
$$\{1,\ldots, n\} = \dot\bigcup_{a\in \{a_i\mid 1\leq i\leq n\}} \{i \mid a_i = a\}.$$
On the other hand, by giving a partition $\underline A := \{A_i \mid i\}$ of $\{1,\ldots, n\}$
and pairwise different integers $a_i\geq 0$ 
the orbit of
$m(\underline a, \underline A) := \prod_{i=1}^s\prod_{j\in A_i}X_j^{a_i}$
is uniquely defined by $\underline A$ already.
Thus to solve our problem of finding a basis for $(R_{\Sym{n}})_d$ we simply
need to find all partitions and exponents such that
$\sum_{i=1}^sa_i\#A_i = d$.
We summarise this in an algorithm:
\begin{algorithm}\label{deg_d}
Let $d$ be an integer. The algorithm produces 
a basis for $(R_{\Sym{n}})_d$.
\begin{enumerate}
\item Let $I := \{\}$.
\item Compute the set $P$ of all partitions of $d$ of length at most $n$.
\item For $p\in P$ do
\pitem Let $p=(p_1, \ldots, p_i)$. 
  Append $I$ by $\prod_{j=1}^i X_j^{p_j}$
\end{enumerate}
\end{algorithm}

However, since we are eventually only interested in finding minimal
degree invariants we introduce more reductions here. The operation 
of $\Sym{n}$ on $m(\underline a, \underline A)$ does only depend on
$\underline A$, so for a minimal degree invariant we can also stipulate that
$a_i+1 = a_{i+1}$ - otherwise the same behaviour can be obtained
with smaller exponents. Similarly, minimal examples will be such that
$\#A_1 \geq \#A_2\geq \ldots\geq \#A_s$.
As an example: the orbits of $X_1 X_2^3$ and of $X_1 X_2^2$ are 
essentially the same, namely the orbit of $\{\{1\},\{2\}\}$. Thus if we are
looking for examples of minimal degree, then $X_1 X_2^3$ need not be
considered.

\subsection{$H$-invariants}
Let $H$ be a subgroup of $\Sym{n}$ and $I$ be a set of monomials
generating different $\Sym{n}$-invariants via orbit sums. Here we address
the problem of refining $I$ to contain a (maximal) set of monomials 
generating $H$-invariants.

Let $m$ be a monomial and $S := \Stab_G(m)$ its stabiliser in some group 
$\Sym{n}\geq G\geq H$. We use the following theorem
\begin{theorem}
Let $G\geq H$ be groups, $m$ a monomial and $S = \Stab_G(m)$ its
stabiliser. Furthermore, let $S\setminus G/H$ be the double cosets of $G$
with respect to $S$ and $H$ and let $\{g_i\mid 1\leq i\leq r\}$
be a set of representatives
(i.e., $G = \cup_{i=1}^r Sg_iH$ and $Sg_iH = Sg_jH$ if and only if $i=j$).
Then $\{m^g_i\mid 1\leq i\leq r\}$ generate linearly independent
$H$-invariants.
\end{theorem}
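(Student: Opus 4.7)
The plan is to interpret the statement as saying that the $H$-orbit sums $\sum (m^{g_i})^H$ (which are $H$-invariants by the theorem quoted earlier in this section) are linearly independent elements of $\Q[\underline X]$, and to prove this by identifying double cosets with $H$-orbits on the $G$-orbit of $m$.

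First I would note the general principle: if $m_1$ and $m_2$ are monomials lying in different $H$-orbits, then their respective $H$-orbit sums $\sum m_1^H$ and $\sum m_2^H$ involve disjoint sets of monomials, so any nontrivial $\Q$-linear relation between orbit sums of monomials would force equality of the underlying orbits. Therefore it is enough to show that the monomials $m^{g_1},\ldots, m^{g_r}$ lie in pairwise distinct $H$-orbits.

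The key step is the double coset computation. Using the right action convention $(F^\sigma)^\tau = F^{\sigma\tau}$, the monomials $m^{g_i}$ and $m^{g_j}$ lie in the same $H$-orbit precisely when there exists $h\in H$ with $m^{g_i h} = m^{g_j}$, equivalently $g_j h^{-1} g_i^{-1}\in S$, equivalently $g_j \in S g_i h \subseteq S g_i H$. Thus $m^{g_i}$ and $m^{g_j}$ are $H$-equivalent if and only if $Sg_iH = Sg_jH$. Since by hypothesis the $g_i$ are representatives of distinct double cosets in $S\setminus G/H$, the monomials $m^{g_1},\ldots,m^{g_r}$ lie in $r$ pairwise distinct $H$-orbits, and the corresponding orbit sums are linearly independent.

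The main obstacle is purely bookkeeping with the action convention: getting the direction of the double coset correct so that the stabiliser $S$ appears on the left and $H$ on the right. Once this is pinned down, the argument reduces to the elementary observation that distinct monomial orbits produce invariants supported on disjoint monomial sets.
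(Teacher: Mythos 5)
Your proposal is correct and follows the same route as the paper, which simply states that the double coset decomposition induces a decomposition of $m^G$ into pairwise disjoint $H$-orbits and that linear independence follows; you have merely filled in the coset bookkeeping and the disjoint-support argument explicitly.
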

\begin{proof}
The linear independence is a direct consequence from the fact that
the double coset decomposition induces a decomposition of
$m^G$ into pairwise disjoint $H$-orbits.
\end{proof}
Thus the computation of $H$-invariants is reduced to the computation
of $\Sym{n}$-invariants followed by a double coset decomposition.
While in general double coset decompositions are hard to compute, it
is feasible here. We make use of the ladder-technique of \cite{schmalz}:
usually to compute double cosets, one computes a coset decomposition
with respect to one group and lets the other group act on them, thus
the complexity depends on the size of the index of the larger group 
in $G$. This procedure is frequently helped by computing a descending chain
from $G=:S_0>\cdots>S_j = S$
down to one smaller group, $S$ for example. The action 
of $H$ on $S\setminus G$ can then be deducted from the action of 
$H$ on $S_{i+1}\setminus S_i$.
Unfortunately, it is hard and frequently impossible to find good subgroup
chains, that is chains with small indices. The new idea introduced in 
\cite{schmalz} is to use a ladder rather than a chain, i.e. to allow
up-ward steps as well as down-ward ones. In order to use this technique
we therefore have to construct a suitable ladder.
This will be achieved by the following procedure:
\begin{algorithm}\label{ladder}
Let $G$ be a permutation group acting on $\Omega$ and $A\subseteq \Omega$
be arbitrary.
This algorithm will compute a ladder $G_i$ such that
$G = G_0$, $G_r=\Stab_G (A)$ and
if $G_i< G_{i+1}$ then $\#G_{i+1}/\#G_i\leq \#\Omega$ and
$G_i > G_{i+1}$ with $\#G_i/\#G_{i+1} \leq \#\Omega$ otherwise.
\begin{enumerate}
\item Let $B := \{\}$, and $i := 1$, $G_0:=G$.
\item for $a\in A$ do
\pitem 
  add $a$ to $B$ and
  compute $G_i := \Stab_{G_{i-1}} \{a\}$.
\item if $B \ne \{a\}$ then 
  $G_{i+1} := \Stab_G B$ and set $i := i+2$.
  Otherwise set $i:=i+1$.
 \end{enumerate}
\end{algorithm}
\begin{proof}
Let $A := \{a_1, \ldots, a_n\}$. The
properties of the $G_i$ are direct consequences of the following facts:
\begin{enumerate}
\item We either have 
 $G_i = \Stab_{G_{i-1}} \{\{a_1, \ldots, a_s\}, \{a_{s+1}\}\}$
 (in which case $G_i < G_{i+1}$)
 or
\item we have an up-ward step and obtain
  $G_{i+1} = \Stab_G \{a_1, \ldots, a_{s+1}\}$. Note that
  $\Stab_{G_{i+1}}\{a_{s+1}\} = G_i$.
\item For $G = \Sym{n}$, we have $\#\Stab_G \{a_1, \ldots, a_s\} = s!(n-s)!$ 
and
$\#\Stab_G \{\{a_1, \ldots, a_s\}, \{a_{s+1}\}\} = s!1!(n-s-1)!$.
\item In general, $\Stab_G A = G \cap \Stab_{\Sym{n}} A$, and for
      any groups $V < U < \Sym{n}$ we have
      $(U \cap G : V \cap G) \leq (U:V)$, thus the bound on the
      indices follows.
\end{enumerate}
\end{proof}
For more general partitions, the Algorithm \ref{ladder} will be called
repeatedly:
\begin{algorithm}\label{main_ladder}
  Let $G$ be a permutation group acting on $\Omega$ and $A = \{A_1,
  \ldots, A_s\}$ a partition of $\Omega$.  This algorithm will compute
  a ladder $G_i$ such that $G = G_0$, $G_r=\Stab_G (A)$ and if $G_i<
  G_{i+1}$ then $\#G_{i+1}/\#G_i\leq \#\Omega$ and $G_i > G_{i+1}$
  with $\#G_i/\#G_{i+1} \leq \#\Omega$ otherwise.
\begin{enumerate}
\item Let $U := G$.
\item For $a\in A$ do
\pitem Compute a ladder from $U$ to $\Stab_U a$ using Algorithm \ref{ladder}
 and print it.
\item Let $U := \Stab_U a$.
\end{enumerate}
\end{algorithm}

Let $G\leq \Sym n$ be arbitrary and $H<G$ a maximal subgroup. In order to compute 
$G$-relative $H$-invariants, we now use one of the following algorithms:
\begin{algorithm}
Let $H<G$ be as above and $d>0$ be an integer.
This algorithm will find a basis for the space
of $G$-relative $H$-invariants of degree $d$.
\begin{enumerate}
\item Compute a basis $B$ for $(R_{\Sym n})_d$ using \ref{deg_d}.
\item For each $b\in B$ do
\pitem Compute the corresponding partition $A$
\item  Use \ref{main_ladder} to compute a ladder $L$ from $\Sym n$ to
  $\Stab_{\Sym n}(b)$ using the partition $A$.
\item Use $L$ to compute a set $C$ of double coset representatives for
  $\Stab_{\Sym n}b \setminus \Sym n / H$
\item For each $c\in C$ do
\pitem Compute the indices of the stabilisers $(H:\Stab_H b^c)$
  and $(G:\Stab_G b^c)$. If they
  differ then $b^c$ generates a $G$-relative $H$-invariant. In this case
  print $\sum_{h \in H//\Stab_H b^c} b^{ch}$
\end{enumerate}
\end{algorithm}
The correctness of the algorithm follows immediately from the above discussions.
We remark that if we want only one invariant rather than a basis, we can
use a probabilistic approach:
\begin{algorithm}
Let $H<G$ as above and $d>0$ be an integer such there exists an $G$-relative
$H$-invariant of degree $d$.
This algorithm will find one
$G$-relative $H$-invariant of degree $d$.
\begin{enumerate}
\item Compute a basis $B$ for $(R_{\Sym n})_d$ using \ref{deg_d}.
\item repeat
\pitem Compute a random element $\sigma\in \Sym n$
\item For each $b\in B$ check if $(G:\Stab_G b^\sigma)$ differs
      from $(H:\Stab_H b^\sigma)$. If so, print 
      $\sum_{h \in H//\Stab_H b^\sigma} b^{\sigma h}$ and terminate.
\end{enumerate}
\end{algorithm}
To find a (minimal) degree $d$ such that there exists an $G$-relative
$H$-invariant we simply compute the difference of the 
Molien series $f_H(t) - f_G(t) = \sum_{i=1}^\infty s_it_i$ and take $d$ 
as the index of any non zero coefficient.

\section{Special Invariants}\label{special}
Like in the previous section we assume that $H$ is a maximal subgroup of $G$.
We use the maximality in our proofs to show that an $H$-invariant $F$ is $G$-relative,
if there exists one element $g\in G\setminus H$ with $F^g\ne F$.

Unfortunately, there are examples where the generic invariants are 
too expensive to compute or the given presentation needs too many arithmetic
operations to evaluate the invariant. The best known example for this
are the groups $H=\Alt{n}$ and $G=\Sym n$. Clearly, $H$ is a maximal subgroup of 
$G$ and the invariant 
$$F_1(X_1,\ldots,X_n) := \sum_{\sigma\in H}(\prod_{i=1}^{n-1} X_i^i)^\sigma$$ 
given in Lemma \ref{genin} is a $\Sym n$-relative $\Alt n$-invariant polynomial
of smallest possible total degree. If we store the powers of $X_i$ we 
need $(n-2) n!/2$ multiplications in order to evaluate this invariant.
If the characteristic is not equal to 2, then a better invariant is well known:
(for any $\sigma\notin H=\Alt n$) 
$$F_2(X_1,\ldots,X_n):=\prod_{1\leq i<j\leq n} (X_i-X_j) = F_1 - F_1^\sigma,$$
which can be evaluated using $n(n-1)/2$ multiplications, if the
factored form is used.

Most of the special invariants presented here follow the same pattern and
are derived from the same source, namely from the different action
of $G$ and $H$ on natural objects like the action on blocks or 
block systems.  Ultimately, as we saw above in the discussion of 
general factorisation patterns, we can use permutation
presentations for $G$ acting on the cosets by any subgroup $V<G$.

In the following we assume that $H < G \leq \Sym n$ where $H$ is maximal in $G$ are acting on
$\Omega:=\{X_1,\ldots,X_n\}$. Let us start with the case that $H$ is acting
intransitively. The proof of the following lemma is trivial.
\begin{lemma}
  Assume that there exists an orbit $\O$ of $H$ on $\Omega$ which is not invariant
  under $G$. Then
  \begin{equation}
    \label{eq:intrans}
    F(X_1,\ldots,X_n):=\sum_{X_i\in\O} X_i
  \end{equation}
  is a $G$-relative $H$-invariant.
\end{lemma}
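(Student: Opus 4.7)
The statement asks for two things: that $F$ is $H$-invariant, and that its stabiliser in $G$ is exactly $H$ (rather than strictly larger). My plan is to verify the first directly from the definition of an orbit, and to verify the second by exhibiting one element of $G\setminus H$ that moves $F$ and then invoking the maximality assumption on $H$.

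First I would check $H$-invariance. For any $h\in H$, the map $X_i\mapsto X_{h(i)}$ permutes the orbit $\O$, since $\O$ is by definition an orbit of $H$ acting on $\Omega$. Hence
\[
F^h \;=\; \sum_{X_i\in\O} X_{h(i)} \;=\; \sum_{X_j\in\O} X_j \;=\; F,
\]
so $H\subseteq\Stab_G(F)$.

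For the relative part, I use the hypothesis that $\O$ is not $G$-invariant. Pick $g\in G$ with $g(\O)\ne\O$. Necessarily $g\notin H$, because every element of $H$ fixes $\O$ setwise. Moreover the distinct monomials $X_i$ form a $\Z$-linearly independent set, so
\[
F^g \;=\; \sum_{X_i\in g(\O)} X_i \;\ne\; \sum_{X_i\in\O} X_i \;=\; F,
\]
which shows $g\notin\Stab_G(F)$. Thus $\Stab_G(F)$ is a subgroup of $G$ that contains $H$ but is properly contained in $G$. Since $H$ is maximal in $G$ by the standing assumption of this section, the only such subgroup is $H$ itself, giving $\Stab_G(F)=H$.

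The argument is essentially bookkeeping: the only point requiring a moment's thought is noticing that the hypothesis ``$\O$ is not $G$-invariant'' automatically supplies a witness $g\in G\setminus H$ rather than merely some $g\in G$, because $\O$ is an $H$-orbit. Everything else follows from maximality and the linear independence of monomials.
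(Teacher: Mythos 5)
Your proof is correct and follows exactly the argument the paper intends: the paper declares this lemma ``trivial'' and omits the proof, relying on the section's standing convention that maximality of $H$ lets one conclude $G$-relativity from a single $g\in G\setminus H$ with $F^g\ne F$, which is precisely what you do. The one point worth spelling out --- that the witness $g$ moving $\O$ automatically lies outside $H$ --- is handled correctly in your write-up.
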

We remark that intransitive groups may occur in our applications even
if we start with transitive groups. The reason is that some of the 
following algorithms will reduce the problem recursively to groups
of smaller degree.

Let us assume for the rest of the section that the given groups
$H\leq G\leq \Sym n$ are transitive. For transitive groups the notion
of blocks and block systems are very important.
We remark that most of 
the following invariants are well known, e.g. see \cite{Gei2,GeKl}.
\begin{definition}
  Let $G\leq \Sym n$ be transitive and $\emptyset\ne B\subseteq \Omega$ be
  a subset. Then $B$ is called a block, if for all $g\in G$ we have
  $B^g \cap B :=\{X^g\mid X\in B\} \cap B \in \{\emptyset,B\}$. 
  Blocks of size $1$ and $n$ are called trivial blocks.
\end{definition}
It is very easy to see that $B^g$ is a block if $B$ is a block. By acting on a block
$B$ we get a partition of $\Omega$ which is called block system.
Therefore
every block is contained in a block system. Furthermore it is easy
to see that the blocks containing $X_1$ are in 1-1 correspondence to
the groups $G_{X_1} \leq U \leq G$, where $G_{X_1} = \Stab_G \{X_1\}$
is the point stabiliser
of $G$ and $U$ is the stabiliser of the block, i.e.
$U=\Stab_G B = \{g \in G\mid B^g=B\}$. 

If $H\leq G$ then clearly every block(-system) of $G$ is a block(-system)
of $H$. But it may be the case that $H$ possesses more blocks.
\begin{lemma}
  Let $H\leq G\leq \Sym n$ be transitive groups and assume that
  $B_1,\ldots,B_m$ is a block system of $H$, but not one of $G$.
  Then
  \begin{equation}
    \label{eq:block}
    F(X_1,\ldots,X_n):=\prod_{i=1}^m \sum_{X\in B_i} X
  \end{equation}
  is a $G$-relative $H$-invariant.
\end{lemma}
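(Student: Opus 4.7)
The plan is to first verify that $F$ is $H$-invariant, then invoke the maximality of $H$ in $G$ (as noted in the opening paragraph of Section \ref{special}) by exhibiting a single $g\in G\setminus H$ with $F^g\ne F$. Since $\Stab_G(F)$ is a subgroup of $G$ containing $H$, maximality forces it to be either $H$ or $G$, so ruling out the latter completes the proof.

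For $H$-invariance, recall that $\{B_1,\ldots,B_m\}$ being a block system of $H$ means that every $h\in H$ permutes the blocks among themselves: there is a permutation $\pi_h\in\Sym{m}$ with $B_i^h=B_{\pi_h(i)}$. Writing $L_i:=\sum_{X\in B_i}X$, we then have $L_i^h=L_{\pi_h(i)}$, so
\begin{equation*}
F^h=\prod_{i=1}^m L_i^h=\prod_{i=1}^m L_{\pi_h(i)}=\prod_{i=1}^m L_i=F.
\end{equation*}

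For the second step, I would use unique factorisation in $\Z[\underline X]$ to pin down $\Stab_G(F)$ precisely. Each $L_i$ is a linear form with pairwise disjoint supports (because the $B_i$ partition $\Omega$), hence each $L_i$ is irreducible in $\Z[\underline X]$ and no two are associates. For any $g\in G$ we have $F^g=\prod_{i=1}^m L_i^g$ with $L_i^g=\sum_{X\in B_i^g}X$, again a product of pairwise non-associate irreducibles. Thus $F^g=F$ if and only if the multiset $\{L_1^g,\ldots,L_m^g\}$ equals $\{L_1,\ldots,L_m\}$, which in turn holds if and only if $g$ permutes the blocks $B_1,\ldots,B_m$. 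In other words, $\Stab_G(F)=\Stab_G(\{B_1,\ldots,B_m\})$.

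Finally, since $\{B_1,\ldots,B_m\}$ is \emph{not} a block system of $G$ by hypothesis, there exists $g\in G$ that does not permute these blocks, so $\Stab_G(F)\lneq G$. Combined with the inclusion $H\leq\Stab_G(F)$ established above, the maximality of $H$ in $G$ yields $\Stab_G(F)=H$, as required. The only real subtlety is step two: one must justify that $F^g=F$ genuinely forces $g$ to permute the $B_i$ setwise, rather than merely rearranging variables \emph{inside} a nontrivial combination of the $L_i$; the UFD argument handles this cleanly because the $L_i$ live in disjoint variable sets and so cannot mix.
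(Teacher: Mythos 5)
Your proof is correct, and its overall skeleton (establish $H$-invariance, exhibit failure of $G$-invariance, conclude by the standing maximality assumption on $H<G$) matches the paper's. The key step is handled differently, though. The paper argues directly on monomials: if $g$ does not respect the partition, then $F^g$ acquires a monomial containing two variables from the same block $B_k$, which cannot occur in $F$, and since all coefficients are nonnegative no cancellation can hide it. You instead factor $F=\prod_i L_i$ into the linear forms $L_i$, observe that these are irreducible, pairwise non-associate elements of the UFD $\Z[\underline X]$, and deduce that $\Stab_G(F)$ is exactly the setwise stabiliser of the partition $\{B_1,\ldots,B_m\}$. Your route is slightly more structural: it identifies the stabiliser precisely rather than merely showing it is proper, which makes completely transparent where the maximality of $H$ is genuinely needed (without it the lemma fails, since $\Stab_G(F)$ could be a group strictly between $H$ and $G$). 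Two cosmetic points: irreducibility of $L_i$ follows from its being of degree $1$ with content $1$, not from disjointness of supports (disjointness is what gives non-associateness); and unique factorisation a priori only matches the factor multisets up to units $\pm1$, but since all the forms involved have nonnegative coefficients the signs are forced, so your conclusion stands.
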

\begin{proof}
  Every $h\in H$ only permutes the factors of $F$ and therefore stabilises
  $F$. Let $g\in G\setminus H$. Then there exist $X_i$ and $X_j$ lying in
  the same block which are mapped to different blocks. This produces
  a monomial of $F^g$ containing $X_iX_j$ which does not exist in $F$.
  Since cancellations are impossible, we get the desired result.
\end{proof}

Now we can assume that the block systems of $H$ and $G$ coincide.
Now let $B_1$, $\ldots$, $B_m$ be a block system of $H$ (and
$G$). We can define two canonical actions of $G$ and $H$. One is by
simply permuting the blocks which give transitive permutation
representations $\bar G$ and $\bar H$ on $m$ points. 
We get the following exact sequences of groups:
$$1 \rightarrow N_G \rightarrow G \rightarrow \bar{G} \rightarrow 1,\;
1 \rightarrow N_H \rightarrow H \rightarrow \bar{H} \rightarrow 1$$
where $N_G$ (resp. $N_H$) is the kernel of the permutation representation.

In the case that $N_H = N_G$ we can apply the following lemma. We
remark that we always get $N_H=N_G$ if $\bar{H}\ne \bar{G}$. This is true
because $H$ is a maximal subgroup of $G$ by our general assumption.
\begin{lemma}\label{blocksub}
  Let $H\leq G\leq \Sym n$ be transitive groups with a common block
  system $B_1,\ldots,B_m$. Assume that the above defined normal
  subgroups $N_H$ and $N_G$ are equal.  Let $E(X_1,\ldots,X_m)$ be a $\bar{G}$-relative
  $\bar{H}$-invariant.  Then
  \begin{equation}
    \label{eq:blocksub}
    F(X_1,\ldots,X_n):=E(Y_1,\ldots,Y_m) \text{ for }Y_i:=\sum_{X\in B_i} X
  \end{equation}
  is a $G$-relative $H$-invariant.
\end{lemma}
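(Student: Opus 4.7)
The plan is to verify $\Stab_G F = H$ by the standard two-step argument, exploiting that $H$ is maximal in $G$: it suffices to check $F^h = F$ for every $h\in H$ and to exhibit one $g\in G\setminus H$ with $F^g\neq F$. In fact I will show non-invariance for every such $g$, which costs nothing more.

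For the first part, each $h\in H$ permutes the common block system via its image $\bar h\in\bar H$, and under the convention $X_i\mapsto X_{\sigma(i)}$ the linear form $Y_j = \sum_{X\in B_j}X$ is sent to $Y_{\bar h(j)}$. Unwinding the substitution gives $F^h = E(Y_{\bar h(1)},\ldots,Y_{\bar h(m)}) = E^{\bar h}(Y_1,\ldots,Y_m)$, and since $E$ is $\bar H$-invariant this equals $F$.

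For the second part, pick $g\in G\setminus H$. The decisive step—where the hypothesis $N_H=N_G$ is used—is to show that the image $\bar g\in\bar G$ lies outside $\bar H$. Indeed, if $\bar g = \bar h$ for some $h\in H$, then $gh^{-1}\in N_G = N_H\leq H$, forcing $g\in H$, a contradiction. Since $E$ is a $\bar G$-relative $\bar H$-invariant, $E^{\bar g}\neq E$ as polynomials in $X_1,\ldots,X_m$. The same block-permutation calculation as before yields $F^g = E^{\bar g}(Y_1,\ldots,Y_m)$, so it only remains to transfer the inequality through the substitution $X_i\mapsto Y_i$. This is legitimate because the $Y_i$ lie in pairwise disjoint polynomial subrings $\Z[X_k : X_k\in B_i]$ and are therefore algebraically independent over $\Q$; hence the substitution homomorphism $\Z[X_1,\ldots,X_m]\to\Z[\underline X]$, $X_i\mapsto Y_i$, is injective, so distinct polynomials in the $X_i$ remain distinct after substitution and $F^g\neq F$.

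The main obstacle is really the correct use of the hypothesis $N_H=N_G$: without it, a $g\in G\setminus H$ could still project into $\bar H$, in which case $E^{\bar g}=E$ and the strategy of importing non-invariance from $E$ to $F$ collapses. Everything else is bookkeeping that identifies the action of $G$ on the $X_i$ with the induced action of $\bar G$ on the block-sums $Y_i$.
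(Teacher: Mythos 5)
Your proof is correct and follows essentially the same route as the paper: use $N_H=N_G$ to show that $g\in G\setminus H$ implies $\bar g\notin\bar H$, and then transport the (non-)invariance of $E$ under $\bar G$ to that of $F$ under $G$ via the block sums $Y_i$. You additionally make explicit a point the paper leaves implicit, namely that the $Y_i$ are algebraically independent so the substitution $X_i\mapsto Y_i$ is injective and $E^{\bar g}\neq E$ really does give $F^g\neq F$.
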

\begin{proof}
  Elements of $N_H=N_G$ only change the ordering of the sum defining
  $Y_i$. Therefore an element $g$ acts on $F$ via the action of $\bar{g}$
  on $E$. Therefore the polynomial $F$ is $H$-invariant. In order to
  show the $G$-relativity, we need to prove that for $g\in G\setminus H$ we
  have $\bar{g}\notin \bar{H}$. The last statement easily follows
  from $N_H=N_G$.
\end{proof}

The other action can be defined within a block $B_1$ via 
$\Stab_G(B_1)|_{B_1}$. We get the following invariant.
\begin{lemma}\label{blockrel}
  Let $H\leq G\leq \Sym n$ be transitive groups with a common block
  system $B_1$, $\ldots$, $B_m$. Let $\tilde H:=\Stab_H(B_1)|_{B_1}$,
  $\tilde G:=\Stab_G(B_1)|_{B_1}$ and assume $[G:H]=[\tilde G:\tilde
  H]$.  Let $E(X_{i_1},\ldots,X_{i_l})$ where $B_1 = \{X_{i_1},
  \ldots, X_{i_l}\}$ is a $\tilde G$-relative $\tilde H$-invariant.
  Furthermore let $\{\sigma_1,\ldots,\sigma_m\}$ be a
  system of representatives of right cosets of $\Stab_H(B_1)$ in $H$. 

  Then $F:=E^{\sigma_1}+\cdots +E^{\sigma_m}$ is a $G$-relative $H$-invariant.
\end{lemma}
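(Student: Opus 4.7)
The plan is to use the block structure to decouple $F$ into pieces supported on disjoint sets of variables, so that both parts of the claim become visible block-by-block. First I would note that because $H$ is transitive on the $m$ blocks and $\{\sigma_1,\dots,\sigma_m\}$ represents the right cosets of the block stabiliser $\Stab_H(B_1)$, the blocks $B_1^{\sigma_1},\dots,B_1^{\sigma_m}$ are exactly $B_1,\dots,B_m$; after relabelling I may assume $B_i = B_1^{\sigma_i}$. Consequently each summand $E^{\sigma_i}$ involves only the variables indexed by $B_i$, and the $m$ summands of $F$ have pairwise disjoint supports. This disjointness is the structural fact that drives the rest of the argument.

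For $H$-invariance, given $h\in H$ I would use that right multiplication by $h$ permutes the cosets $\Stab_H(B_1)\sigma_i$: writing $\sigma_i h = s_i \sigma_{\tau(i)}$ with $s_i\in\Stab_H(B_1)$ and $\tau$ a permutation of the indices, the $\tilde H$-invariance of $E$ (together with the fact that the action of $s_i$ on $E$ is via its restriction $s_i|_{B_1}\in\tilde H$) gives $E^{\sigma_i h} = E^{\sigma_{\tau(i)}}$, and summing over $i$ yields $F^h = F$.

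For $G$-relativity I would invoke the maximality of $H$ in $G$ to reduce the claim to: $F^g = F$ implies $g\in H$. Let $g\in G$ and let $\rho$ be the permutation of $\{1,\dots,m\}$ induced by $g$ on the blocks. Because $E^{\sigma_i g}$ is supported on the variables of $B_{\rho(i)}$ and the supports $B_j$ are disjoint, matching the $B_j$-components of $F^g$ and $F$ forces $E^{\sigma_i g} = E^{\sigma_{\rho(i)}}$ for each $i$. Equivalently, $s_i := \sigma_i g\sigma_{\rho(i)}^{-1}$ lies in $\Stab_G(B_1)$ and fixes $E$, so its restriction $s_i|_{B_1}$ lies in $\Stab_{\tilde G}(E)=\tilde H$.

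The hard part, and where the index hypothesis finally bites, is to upgrade $s_i|_{B_1}\in\tilde H$ to $s_i\in H$. A short index count, using that $G$ and $H$ are both transitive on blocks, gives $[\Stab_G(B_1):\Stab_H(B_1)]=[G:H]$ and hence
$$[\tilde G:\tilde H] \;=\; [\Stab_G(B_1):\Stab_H(B_1)]\cdot\frac{|N'|}{|N|},$$
where $N$ and $N'$ are the kernels of the restriction maps $\Stab_G(B_1)\to\tilde G$ and $\Stab_H(B_1)\to\tilde H$ respectively. The assumption $[G:H]=[\tilde G:\tilde H]$ forces $N=N'$, so in particular $N\subseteq H$. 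Lifting $s_i|_{B_1}$ to some $h\in\Stab_H(B_1)$ with the same restriction to $B_1$ yields $s_ih^{-1}\in N\subseteq H$, hence $s_i\in H$, and finally $g=\sigma_i^{-1}s_i\sigma_{\rho(i)}\in H$, completing the proof.
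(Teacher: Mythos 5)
Your proof is correct and follows essentially the same route as the paper's: decompose group elements via right cosets of the block stabiliser, use the disjoint supports of the $E^{\sigma_i}$ to compare block components, and use the index hypothesis $[G:H]=[\tilde G:\tilde H]$ to force the kernel of restriction to $B_1$ into $H$. The only difference is one of detail: you make explicit the kernel computation ($N=N'$) that the paper compresses into the bare assertion that $\tilde g\notin\Stab_H(B_1)$ implies $E^{\tilde g}\ne E$.
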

\begin{proof}
  An element of $H$ can be uniquely written as a product of an element
  of $\Stab_H(B_1)$ and some $\sigma_i$. The first one stabilises $E$
  and the second one only permutes the $E^{\sigma_i}$. Therefore $F$
  is invariant under $H$. Since $[G:H]=[\tilde{G}:\tilde{H}]$ we see
  that $\{\sigma_1,\ldots,\sigma_m\}$ are representatives of the right
  cosets of $\Stab_G(B_1)$ in $G$. Since an element $g \in G\setminus
  H$ can be uniquely written as a product $\tilde{g}\sigma_i$ of an
  element $\tilde{g}\in\Stab_G(B_1)$ and some $\sigma_i$ we get that
  the element $\tilde{g}$ cannot be an element of $\Stab_H(B_1)$.
  Therefore $E^{\tilde{g}} \ne E$. Furthermore the $X_j$ which appear
  in $E^{\sigma_i}$ are different for different $i$'s which shows that
  $F^g \ne F$.
\end{proof}

Now we have to deal with groups where the number of block systems is the same
and it is not possible to use Lemma \ref{blocksub} or Lemma \ref{blockrel}.
In this situation, we can try the following (\cite[6.19]{Gei2}) in the situation
that the size of $O$ is not too large:
\begin{lemma}\label{BlockQuotients}
Let $U := \Stab_H(B_1)|_{B_1} = \Stab_G(B_1)|_{B_1}$ and $K_1<K_2\leq U$.
Now let $F$ be a $K_2$-relative $K_1$-invariant such that $O:= F^G = F^H$ and 
the orbit $O$ has the form $\{F^{\sigma_1}, \ldots, F^{\sigma_o}\}$ for suitable elements
$\sigma_1,\ldots,\sigma_o\in H$.
Finally let $\rho : G \to \Sym{O}$ be the permutation representation
of $G$ on $O$. If $\rho(H) \ne \rho(G)$ then let $Y$ be a $\rho(G)$-relative
$\rho(H)$-invariant. For a suitable Tschirnhausen transformation
$t\in \Z[x]$ we have that 
$$I := Y(t(F^{\sigma_1}(\underline X)), \ldots, t(F^{\sigma_o}(\underline X)))$$
is a $G$-relative $H$-invariant.
\end{lemma}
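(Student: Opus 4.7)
The plan is to verify two things: first, that $I\in\Z[\underline X]^H$; second, that $I^g\ne I$ for at least one $g\in G\setminus H$. By maximality of $H$ in $G$ the stabiliser $\Stab_G(I)$ is a subgroup of $G$ lying strictly between $H$ (inclusive) and $G$ (exclusive), so it must equal $H$, giving the claimed $G$-relative $H$-invariance.

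Write $\phi_i:=F^{\sigma_i}$. For $g\in G$ the right action permutes the orbit $O=\{\phi_1,\ldots,\phi_o\}$ by $\phi_i\mapsto\phi_i^g=F^{\sigma_ig}=\phi_{\rho(g)(i)}$, and since $t$ has integer coefficients, $t(\phi_i)^g=t(\phi_i^g)=t(\phi_{\rho(g)(i)})$; hence
$$I^g=Y^{\rho(g)}\bigl(t(\phi_1),\ldots,t(\phi_o)\bigr).$$
For $g=h\in H$ the $\rho(H)$-invariance of $Y$ gives $Y^{\rho(h)}=Y$ and therefore $I^h=I$, settling the $H$-invariance. For the converse I first check that $\rho(g)\notin\rho(H)$ whenever $g\in G\setminus H$: if $\rho(g)=\rho(h)$ with $h\in H$ then $gh^{-1}\in\ker\rho=:N$, so $\langle H,g\rangle\subseteq HN$; by maximality $HN$ is either $H$ (forcing $g\in H$) or $G$ (forcing $\rho(G)=\rho(H)$), both excluded. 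Fix $g\in G\setminus H$, set $\pi:=\rho(g)$; since $Y$ is $\rho(G)$-relative $\rho(H)$-invariant, $D:=Y^\pi-Y$ is a non-zero element of $\Z[X_1,\ldots,X_o]$ and $I^g-I=D\bigl(t(\phi_1),\ldots,t(\phi_o)\bigr)$.

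The last step is to choose $t$ so that this substitution does not vanish identically in $\underline X$; this is where the Tschirnhaus freedom enters in its usual separating role. The $\phi_1,\ldots,\phi_o$ are pairwise distinct polynomials, so the finitely many $\phi_i-\phi_j$ admit a common integer non-vanishing point $\underline x\in\Z^n$, at which the values $v_i:=\phi_i(\underline x)$ are distinct integers. Parameterising $t(x)=c_0+c_1x+\cdots+c_{o-1}x^{o-1}$, the evaluation $(c_0,\ldots,c_{o-1})\mapsto(t(v_1),\ldots,t(v_o))$ is a Vandermonde-linear isomorphism $\Q^o\to\Q^o$, so $D\bigl(t(v_1),\ldots,t(v_o)\bigr)$ is a non-zero polynomial in the $c_j$ and therefore takes a non-zero value at some tuple in $\Z^o$. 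Specialising yields $t\in\Z[x]$ with $D(t(\phi_1),\ldots,t(\phi_o))(\underline x)\ne 0$, so the polynomial $D(t(\phi_1),\ldots,t(\phi_o))\in\Z[\underline X]$ itself is non-zero and $I^g\ne I$.

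The main subtlety is precisely the final paragraph: it is conceivable that $D$ vanishes identically on the specific tuple $(\phi_1,\ldots,\phi_o)$ because of hidden algebraic relations among the orbit elements, and the freedom in choosing $t$ is exactly what is available to break any such accidental coincidence. Once this is cleared, the $H$-invariance and the non-stabilisation by some element of $G\setminus H$ combine with the maximality of $H$ to yield the lemma.
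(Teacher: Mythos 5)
Your proposal is correct and follows the same overall outline as the paper's proof (show $H$-invariance, exhibit one $g\in G\setminus H$ with $I^g\ne I$, invoke maximality of $H$), but the paper's own proof is only three sentences and treats the non-$G$-invariance as ``immediate'' from the fact that $Y$ is not $\rho(G)$-invariant. You supply the two substantive points that the paper elides: first, that $\rho(g)\notin\rho(H)$ for every $g\in G\setminus H$, which you derive correctly from maximality via $HN$ with $N=\ker\rho$ together with the hypothesis $\rho(H)\ne\rho(G)$; second, and more importantly, why a suitable Tschirnhausen transformation $t$ exists at all --- the paper never explains the role of $t$, whereas your argument (separating the pairwise distinct orbit elements $\phi_i$ at an integer point and then using the Vandermonde isomorphism in the coefficients of $t$ to pull the non-vanishing of $D=Y^{\rho(g)}-Y$ back to a non-vanishing of $D(t(\phi_1),\ldots,t(\phi_o))$) is exactly the missing justification that $I^g-I$ is a non-zero polynomial despite possible algebraic relations among the $\phi_i$. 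In short, your proof is a strictly more complete version of the paper's argument rather than a different route.
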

\begin{proof}
  Since $G$ and $H$ act identically on the block $B_1$, the orbits $F^G$
  and $F^H$ are the same.
  By construction, $I$ is clearly $H$-invariant, all that we need to show is 
  that $I$ is not $G$-invariant. Since $F$ is not $\rho(G)$ invariant,
  this is immediate.
\end{proof}
It should be noted that the use of blocks above is only part of the attempt to
create an invariant $F$ with a small orbit.

The following theorem is a generalisation of a result of Eichenlaub \cite{Eic},
who proved the corresponding result for wreath products of symmetric groups.
Recall that a wreath product $U\wr V$ is a semidirect product of
the type $U^m \rtimes V$, where $V\leq S_m$ and the action of $V$ permutes
the copies of $U$. For a formal definition we refer the reader to 
\cite[p. 46]{DiMo}.
 
\begin{theorem}
  Let $G = U \wr V$ be the wreath product acting on $X_{i,j}\;(1\leq i
  \leq d, 1\leq j\leq m)$, where $U\leq S_d$, $V\leq S_m$ and
$md=n$. Furthermore let $N\unlhd U$ be a normal subgroup of index 2.
Let $E$ be a $U$-relative $N$-invariant with the property
that $E^u=-E$ for all $u\in U\setminus N$. Denote by $s_k$ the $k$-th elementary
symmetric function on $m$ letters. Then $G$ has a subgroup $H$ of index 2
and 
$$F(X_{1,1},\ldots,X_{d,m}):=s_m(d_1,\ldots,d_m)=d_1\cdots d_m$$
is a  $G$-relative $H$-invariant,
where $d_j:=E(X_{1,j},\ldots,X_{d,j})$.
\end{theorem}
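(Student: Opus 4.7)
The plan is to exhibit an index-$2$ subgroup $H\leq G$ as the kernel of an explicit sign homomorphism $\phi\colon G\to\{\pm1\}$ and then verify that $F^g=\phi(g)F$ for every $g\in G$, from which $\Stab_G F=\ker\phi=H$ will follow immediately, showing that $F$ is a $G$-relative $H$-invariant.

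To construct $\phi$, let $\epsilon\colon U\to\{\pm1\}$ be the surjective homomorphism with kernel $N$, and for $g=(u_1,\ldots,u_m;v)\in U^m\rtimes V=G$ set
$$\phi(g):=\prod_{j=1}^m\epsilon(u_j).$$
The first thing to check is that $\phi$ is a group homomorphism. Since multiplication in the wreath product has the form $(\underline u;v)(\underline u';v')=(\underline u\cdot v(\underline u');vv')$, where $v(\underline u')$ denotes the permutation of the coordinates of $\underline u'$ by $v$, and since reindexing under a permutation does not affect a product, one obtains $\prod_j\epsilon(u_j\cdot u'_{v^{-1}(j)})=\prod_j\epsilon(u_j)\cdot\prod_j\epsilon(u'_j)$. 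As $N\neq U$, the map $\phi$ is surjective, so $H:=\ker\phi$ has index $2$ in $G$.

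Next I would track the action on the building blocks $d_j=E(X_{1,j},\ldots,X_{d,j})$. The wreath product action on the variables $X_{i,j}$ permutes the columns according to $v$ while letting $u_j$ act within the $j$-th column. Using the hypothesis $E^u=\epsilon(u)\,E$ valid for every $u\in U$ (it holds on $N$ by $N$-invariance and on $U\setminus N$ by assumption), a direct substitution yields
$$d_j^g=\epsilon(u_j)\,d_{v(j)}.$$
Multiplying these factors and reindexing $\prod_j d_{v(j)}=\prod_j d_j=F$ then gives $F^g=\phi(g)F$, so $\Stab_G F=\ker\phi=H$ as required.

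The only real obstacle is bookkeeping: one has to fix consistent conventions both for the wreath-product action on $\{1,\ldots,d\}\times\{1,\ldots,m\}$ and for how an element $u\in U$ acts on a polynomial in $d$ variables, so that the verification of $\phi$ being a homomorphism and the identity $d_j^g=\epsilon(u_j)d_{v(j)}$ match up. Beyond this, the argument is formal; note that $F\neq 0$ since $E\neq 0$ and we work in an integral domain, so the equality $F^g=\phi(g)F$ really forces $\Stab_G F=H$ rather than a larger subgroup.
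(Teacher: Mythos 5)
Your proof is correct and rests on the same key observation as the paper's: $F$ transforms by a sign under every element of $G$, and that sign is a nontrivial character of $G$ whose kernel is the desired index-$2$ subgroup $H$. The paper's proof checks invariance on generators of a candidate subgroup $\langle N\wr V,\,u_1u_2\rangle$ and merely asserts that $F^g=\pm F$ for general $g$, whereas you make the character $\phi(g)=\prod_j\epsilon(u_j)$ explicit and prove $F^g=\phi(g)F$ outright; this is a cleaner packaging of the same argument and pins down $\Stab_G F=\ker\phi$ exactly.
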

We remark that in the original statement given in \cite{Eic} there are
two other subgroups of index 2. One is $S_d \wr A_m\leq S_d \wr S_m$
which can be dealt with Lemma \ref{blocksub} and the other one comes
from the fact that whenever we have two subgroups of index 2, there
will be a third one. An invariant for this can be efficiently computed
using the first two invariants, see Lemma \ref{combine}.
\begin{proof}
  Clearly, we have $N\wr V \leq U \wr V$ and using Lemma
  \ref{blockrel} we get that $E+E^u$ with $u\in U\setminus N$ is a
  $G$-relative $N\wr V$-invariant.  Let $u\in U\setminus N$
  be an arbitrary element and let $u_1$ and $u_2$ be the canonical
  images of $u$ in the first and second copy of $U^m$ in $G$,
  respectively. Now we claim that $H = \langle N\wr V, u_1u_2 \rangle\leq G$.
  Clearly, $F$ fixes all elements of $N\wr V$ because all $d_i$ are fixed
  by elements of $N$ and swapped by elements of $V$. The element $u_1u_2$
  fixes $d_3,\ldots,d_m$ and has the property that $d_1^{u_1u_2}=-d_1$ and
  $d_2^{u_1u_2}=-d_2$. Therefore we get $F^{u_1u_2}=F$. For an arbitrary element
  $g\in G$ we get that $F^g=\pm F$ and therefore the index of $H$ in $G$ is
  at most 2.
  Clearly, $F^{u_1}=-F$ and therefore $H\ne G$ and $F$ is $G$-relative.
\end{proof}
We remark that this invariant can be applied to groups $G$ which are not 
wreath products. E.g. it could be possible that $G$ is contained in a wreath
product, but not in the index 2-subgroup and $H$ is contained in that index
2-subgroup.

As already mentioned it is possible to combine relative invariants in order to
get new ones. Suppose $G\leq \Sym n$ has two subgroups $H_1<G$ and $H_2<G$ with
$G$-relative $H_i$-invariants $F_i$. On the invariant 
field side, this corresponds to $\Q(\underline X)^G$
having two finite separable extensions
$\Q(\underline X)^G(F_i)$ corresponding to $\Q(\underline X)^{H_i}$ 
with normal closures
$M^{C_i}$ and $C_i := \Core_G(H_i)$. In this situation we can transfer 
information about $H_i$ to all subfields (and the corresponding fix groups)
of the compositum $M^{C_1}M^{C_2} = M^{\Core_G(H_1\cap H_2)}$.

The first such example already appears in \cite{Eic}.
\begin{lemma} \label{combine}
  Let $G\leq \Sym n$ be a permutation group which has two subgroups
  $H_1\ne H_2$ of index
  2 with $G$-relative $H_i$-invariants $F_i$.
  If $F_i^g = \pm F_i$ for $g\in G$, then $F_1F_2$ is a $G$-relative $H_3$-invariant for
  $H_3:=(H_1 \cap H_2) \cup ((G\setminus H_1) \cap (G\setminus H_2))$.
\end{lemma}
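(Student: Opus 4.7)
The plan is to encode the hypothesis $F_i^g = \pm F_i$ as a linear character of $G$ and then reduce the claim to an elementary character-theoretic identity. Concretely, for $i=1,2$ define $\chi_i \colon G \to \{\pm 1\}$ by the rule $F_i^g = \chi_i(g)\, F_i$. This is well-defined because $F_i$ is a nonzero polynomial, so the scalar $\chi_i(g)$ is determined by $g$.

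Next I would verify that $\chi_i$ is a group homomorphism and identify its kernel. For $g,h \in G$ we have $F_i^{gh} = (F_i^g)^h = \chi_i(g)\, F_i^h = \chi_i(g)\chi_i(h)\, F_i$, so $\chi_i(gh)=\chi_i(g)\chi_i(h)$. The kernel of $\chi_i$ is precisely $\Stab_G F_i = H_i$, since $F_i$ is a $G$-relative $H_i$-invariant. Thus $H_i = \ker\chi_i$ has index $2$, consistent with the hypothesis.

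Now consider the product character $\chi := \chi_1 \chi_2 \colon G \to \{\pm 1\}$, which is again a homomorphism. By definition, $g \in \ker \chi$ iff $\chi_1(g) = \chi_2(g)$, i.e.\ either both are $+1$ (meaning $g \in H_1 \cap H_2$) or both are $-1$ (meaning $g \in (G\setminus H_1)\cap(G\setminus H_2)$). Hence $\ker\chi = H_3$, which shows incidentally that $H_3$ is a subgroup of $G$. Since $H_1 \ne H_2$, the characters $\chi_1$ and $\chi_2$ differ on some element of $G$, so $\chi$ is nontrivial and $H_3$ has index exactly $2$ in $G$.

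Finally, I would read off the invariant property directly: for every $g \in G$,
\[
(F_1 F_2)^g \;=\; F_1^g F_2^g \;=\; \chi_1(g)\chi_2(g)\, F_1 F_2 \;=\; \chi(g)\, F_1 F_2,
\]
so $\Stab_G(F_1F_2) = \ker\chi = H_3$, i.e.\ $F_1 F_2$ is a $G$-relative $H_3$-invariant. There is no real obstacle here; the only subtle point is observing that the hypothesis $F_i^g = \pm F_i$ is exactly what is needed to promote the stabiliser of $F_i$ to a homomorphism on all of $G$, after which everything reduces to the multiplication table of $\{\pm 1\}$.
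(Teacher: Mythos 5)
Your proof is correct and is essentially the paper's argument repackaged: the paper carries out the same sign computation by direct case analysis on elements ($h\in H_1\cap H_2$, then $h\in H_3\setminus(H_1\cap H_2)$ where both signs are $-1$, then $g\in G\setminus H_3$ where exactly one sign is $-1$), while you encode it as the product of two sign characters $\chi_1\chi_2$ with kernel $H_3$. The character formulation has the minor added benefit of showing for free that $H_3$ is a subgroup of index $2$, a fact the paper's statement leaves implicit.
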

\begin{proof}
  An element of $H_1\cap H_2$ clearly stabilises $F$. Therefore let 
  $h\in H_3\setminus H_1\cap H_2$. Then $h\notin H_1\cup H_2$ and therefore
  $F_1^h=-F_1$ and $F_2^h=-F_2$ which gives $F^h=F$. This proves that $F$ is
  $H_3$-invariant. Let $g\in G \setminus H_3$. Then $g\in H_1$ or $g\in H_2$,
  but $g\notin H_1\cap H_2$. Therefore $F^g=-F$ and $F$ is $G$-relative.
\end{proof}
Even if the invariants do not satisfy $F_i^g=\pm F_i$, the above Lemma 
\ref{combine} can be used, since for $G//H_i = \{\Id, g\}$
we see that $\tilde F_i := F_i -F_i^g$ is a $G$-relative $H_i$-invariant with
the desired property $\tilde F_i^g = -\tilde F_i$.

In the more general situation we can still use the field theoretic
view to combine information from two (or more) subgroups:
Assume $G<\Sym n$ has two subgroups $H_i<G$  ($i=1,2$) with
  $G$-relative $H_i$-invariants $F_i$, set $H_{12} := H_1 \cap H_2$ and
  $C_i := \Core_G(H_i)$ for $i\in\{1,2,12\}$.
  Then for any maximal subgroup $C_{12}<H_3<G$ a $G$-relative $H_3$
  invariant can be constructed by any of the following methods from
  $F_i$, ($i=1,2$) and a $G/C_{12}$-relative $H_3/C_{12}$-invariant.
  Also, set $K := \Q(\underline X)^G$.
  \begin{enumerate}
  \item (Intransitive construction)
  Let $\tilde H_i$ be the permutation representation of $G$ on $G//H_i$,
  $i\in \{1,2,12\}$.
  We consider the subdirect product $\tilde H_1 \times_{H_{12}} \tilde H_2
  \cong \tilde H_{12} \cong G/C_{12}$.
  The maximal subgroup $H_3<G$ corresponds to a maximal subgroup
  $\tilde H_3 < \tilde H_{12}$. Let $F$ be an invariant for this pair, 
  then $F([F_1^s : s \in G//H_1], [F_2^s : s \in G//H_2])$ is
  a $G$-relative $H_3$-invariant.
  \item (Transitive construction) 
    By the primitive element theorem, we can find an invariant $F_i$, $i=1,2$ 
    such that $K(F_i) = \Q(\underline X)^{C_i}$. Again,
    by the primitive element theorem, we find some $r$ such that
    $F_1+rF_2$ is primitive for $\Q(\underline X)^{C_{12}}$. From here
    it is straight forward to obtain an invariant for $H_3$ as a polynomial
    in $F_1+rF_2$.
  \end{enumerate}
In general, this is only applicable if the indices of the 
groups in question are small. In particular, the transitive 
construction is mainly of interest for normal subgroups.

\section{Intransitive groups}
\label{sec:intrans}

The Stauduhar algorithm works for intransitive groups (from reducible
polynomials) in the same way as it does for transitive groups (from
irreducible polynomials).  Let $f\in \Q[x]$ be a squarefree polynomial
of degree $n$. Assume that $f=f_1\cdots f_r\in \Q[x]$ has $r$ factors
of degree $n_i = \deg f_i$. Then we know that $\Gal(f)$ is a
subgroup of the intransitive group $\Sym{n_1} \times \ldots \times
\Sym{n_r}\leq \Sym{n}$.  Using the methods for irreducible polynomials
we can compute the Galois groups $G_i:=\Gal(f_i) \leq \Sym{n_i}$. Then
$\Gal(f) \leq G_1 \times \ldots \times G_r \leq \Sym{n}$. This direct
product can be used as a starting group of our algorithm.

In order to simplify the presentation, let us assume that $\Gal(f)
\leq G_1 \times G_2 < \Sym{n}$. This is no restriction, since we do
not assume that $G_1$ or $G_2$ are transitive. Therefore we have a
corresponding factorisation $f=f_1f_2\in \Q[x]$, where we do not assume
that $f_1,f_2$ are irreducible.  All groups $H$ with $\Gal(f)\leq H
\leq G_1\times G_2$ have a special structure. Let us start to theoretically describe $\Gal(f)$.
Denote by $N_i$ the splitting field of $f_i$. Furthermore define $N$ to be the compositum $N_1N_2$
and $M:=N_1 \cap N_2$. Let $U$ be the Galois group of $M/K$. Then the Galois group of $N/M$ is
the subdirect product (fibre product) $G_1 \times_U G_2$ with common factor group $U$. Denote by
$\phi_i: G_i \rightarrow U$ the corresponding epimorphisms. Then $G_1 \times_U G_2$ can be realized
via 
$$G_1 \times_U G_2 = \{(g_1,g_2) \in G_1\times G_2 \mid \phi_1(g_1) = \phi_2(g_2)\}.$$

Now let us consider the case that $H=G_1 \times_U G_2$ and
$G=G_1\times G_2$.  We remark \cite[Corollary 1.3]{BrMi} that $H$ is a
normal subgroup of $G$, if and only if $U$ is abelian.  Define
$V_i\leq G_i$ to be the normal subgroups such that $G_i/V_i = U$. Then
we get the following chain of subgroups:
$$V_1\times V_2 \leq G_1 \times_U G_2 \leq G_1 \times G_2.$$
A $G_1\times G_2$-relative $V_1 \times V_2$-invariant can be computed by using the corresponding
$G_i$-relative $V_i$-invariants defined on the components and the primitive field argument.
This invariant can be improved to a $G$-relative $H$-invariant by taking sums over elements from
$H//(V_1\times V_2)$.

In general, since the generic invariants are computationally bad, we would
like to use special invariants in this case as well. However, none of them
work for intransitive groups, so our only chance here is to compute
a transitive representation of the larger group and then test for special 
invariants in the transitive representation.
Let $H<G$ and $G$ be intransitive. If the $G$-orbits and the $H$-orbits
differ, we get a trivial $G$-relative $H$-invariant from any $H$-orbit that 
is no $G$-orbit. Hence, we assume that the orbits are the same. 
Similarly, we assume that the  action of $G$ and $H$ on the orbits agree.
In this case we construct a transitive representation $\phi:G \to \Sym T$ 
of $G$ on the
set $T := \prod_{o\in O} o$ where the product runs over all orbits. The image
$\phi(H)$ of $H$ under this representation is again a subgroup of $\phi(G)$ and
we can now test for special invariants:
\begin{lemma} Assume $I\in \Z[X_t\mid t\in T]$ is a $\phi(G)$-relative $\phi(H)$-invariant.
Then there exist a suitable Tschirnhausen transformation $y\in \Z[x]$ such that
$$I(y(\sum_{o\in O} X_{t_o})\mid t\in T)$$
is a $G$-relative $H$-invariant.
\end{lemma}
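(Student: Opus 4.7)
The plan is to transport the $\phi(H)$-invariance of $I$ through the substitution $X_t\mapsto y(Y_t)$, where $Y_t:=\sum_{o\in O}X_{t_o}$, and then argue that for a sufficiently generic Tschirnhausen transformation $y$ this substitution is faithful enough to preserve strict $\phi(G)$-relativity. First I would establish $G$-equivariance of the substitution by a direct computation: for $g\in G$ and $t=(t_o)_{o\in O}\in T$,
$$Y_t^g \;=\; \sum_{o\in O} X_{g(t_o)} \;=\; Y_{\phi(g)(t)},$$
and since $y$ acts by a univariate polynomial substitution, the same holds after applying $y$, i.e.\ $y(Y_t)^g = y(Y_{\phi(g)(t)})$. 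Setting $F := I(y(Y_t)\mid t\in T)$, a straightforward unfolding gives $F^g = (I^{\phi(g)})(y(Y_t)\mid t\in T)$; the $\phi(H)$-invariance of $I$ then yields $F^h = F$ for every $h\in H$ at once.

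Next I would note that $\phi\colon G\to\Sym T$ is injective: an element of $G$ fixing every tuple of $T = \prod_{o\in O}o$ must fix every point of every orbit and therefore be trivial. So $G\setminus H$ maps bijectively under $\phi$ onto $\phi(G)\setminus\phi(H)$, and the $\phi(G)$-relativity of $I$ produces nonzero polynomials $J_g := I^{\phi(g)} - I\in\Z[X_t\mid t\in T]$ for every $g\in G\setminus H$. Writing $\Phi_y$ for the substitution map $X_t\mapsto y(Y_t)$, the assertion $F^g\ne F$ is precisely $\Phi_y(J_g)\ne 0$ in $\Z[\underline X]$.

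The main obstacle is ensuring this non-vanishing, and it is not automatic. The linear forms $Y_t$ span a space of dimension at most $n$, while $|T|$ can be much larger, so when $|T|>n$ the $Y_t$ satisfy nontrivial linear relations, and for the naive choice $y(x)=x$ the kernel of $\Phi_y$ is exactly the ideal they generate; \emph{a priori} some $J_g$ could lie in this kernel. The Tschirnhausen is introduced precisely to avoid this: a direct check shows, for instance, that the $2\times 2$-pattern linear relation $Y_{(1,3)}+Y_{(2,4)}-Y_{(1,4)}-Y_{(2,3)}=0$ among forms $X_i+X_j$ becomes, after squaring, the nonzero polynomial $2(X_1-X_2)(X_3-X_4)$. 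In general, writing $y=\sum_{i=0}^d a_ix^i$ with indeterminate coefficients, $\Phi_y(J_g)$ is a polynomial in the $a_i$ with coefficients in $\Z[\underline X]$ that becomes non-identically zero once $d$ is large enough, so ``$\Phi_y(J_g)\ne 0$'' defines a nonempty Zariski-open subset of the coefficient space. Since $G//H$ is finite, only finitely many such conditions must hold simultaneously, and so a common $y$ exists. For this $y$ we have $\Stab_G F = H$, and $F$ is the claimed $G$-relative $H$-invariant.
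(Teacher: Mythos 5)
Your overall architecture is sound and follows the same route as the paper's (very terse) proof: equivariance of the substitution $X_t\mapsto y(Y_t)$ gives the $H$-invariance of $F$, injectivity of $\phi$ reduces $G$-relativity to the non-vanishing of $\Phi_y(J_g)$ for the finitely many cosets $g\in G//H$, and a suitable $y$ is to be produced by a genericity argument over the coefficients of $y$. You also correctly flag that the naive choice $y(x)=x$ can fail when $\#T>n$, which is exactly where the Tschirnhausen transformation earns its keep.

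The gap is at the pivotal assertion that $\Phi_y(J_g)$, viewed as a polynomial in the indeterminate coefficients $a_0,\dots,a_d$ of $y$, ``becomes non-identically zero once $d$ is large enough.'' You give no argument for this, and it is the entire content of the lemma; moreover it is false without the one fact your write-up never states or uses, namely that the linear forms $Y_t=\sum_{o\in O}X_{t_o}$ are \emph{pairwise distinct}: if $t\ne s$ there is an orbit $o$ with $t_o\ne s_o$, and then $X_{t_o}$ occurs in $Y_t$ but in no summand of $Y_s$, since distinct orbits are disjoint. (This distinctness is precisely what the paper's one-sentence proof invokes: ``all the conjugates $\sum_{o\in O}X_{s_o}$, $s\in T$, are different.'') Granting it, the missing step closes by interpolation: for $d\geq \#T-1$ the matrix $\bigl(Y_t^{\,j}\bigr)_{t\in T,\ 0\le j\le d}$ contains an invertible Vandermonde block over $K:=\Q(\underline X)$ because the $Y_t$ are pairwise distinct, so the $K$-linear map $(a_j)\mapsto \bigl(y(Y_t)\bigr)_{t\in T}$ is surjective onto $K^{T}$; since the nonzero polynomial $J_g$ cannot vanish on all of $K^{T}$ ($K$ being infinite), the composite is a nonzero polynomial in the $a_j$ with coefficients in $\Z[\underline X]$, hence does not vanish on all of $\Z^{d+1}$. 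Multiplying the finitely many $J_g$ together produces one integral $y$ that works for every coset simultaneously. With the distinctness of the $Y_t$ recorded and this interpolation step inserted, your proof is complete and is in fact more explicit than the one in the paper.
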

\begin{proof}
For any fixed $t\in T$ it is clear that 
$\sum_{o\in O} X_{t_o}$ is a primitive element
for $\Q[X_t\mid t\in T]^{\phi(H)}/\Q[X_t\mid t\in T]^{\phi(G)}$ since
all the conjugates $\sum_{o\in O} X_{s_o}$ $s\in T$ are different.
\end{proof}

\section{Computation of Galois Groups}\label{sec7}
In the previous sections we investigated a variety of special and generic
constructions for invariants. Here we are going to discuss how they can be
used to compute Galois group.

To start, let $f$ be an irreducible monic polynomial in $\Z[x]$ and let $K$
be an extension of $\Q$ such that $f(x) = \prod_{i=1}^n (x-\alpha_i)$ with $\alpha_i\in K$,
i.e. $K$ a fixed splitting field, not necessarily a minimal one.
We want to compute
$$\Gal(f) := \Aut(\Q(\alpha_1, \ldots, \alpha_n)/\Q) \leq \Sym{\alpha_1, \ldots, \alpha_n}.$$
Since we assume $f$ to be irreducible, $\Gal(f)$ is a transitive subgroup of $\Sym n$.

Suitable choices for $K$ are $p$-adic fields or the field of complex numbers.
We will defer the choice of the field until we discussed the operations
we need to perform with it. Thus we assume that (somehow) we are given 
a field $K$ and the roots $\alpha_i$ in some arbitrary but fixed ordering.

The main algorithm will, starting with some group $G \geq \Gal(f)$ refine
the initial guess by considering (maximal) subgroups. Hence the first
step is a good starting group.

\subsection{Starting Group}\label{step2}
Naively, obviously, $\Gal(f)\le \Sym n $, so $G := \Sym n$ is a valid 
start. However, this is very bad for the subsequent steps as $\Sym n$ has
maximal subgroups of very large index.

Set $E := \Q(\alpha_1) = \Q(x)/f$ which is a number field of degree $n$.
Using algorithms developed by Kl\"uners \cite{diss_JK} (or recently
van Hoeij, Kl\"uners and Novocin \cite{issac2011}) it is relatively
easy to find subfields $\Q\subset F\subset E$ - or to decide that there are
no subfields. By Galois theory, the subfields are in 1-1 correspondence
to the (unknown) block systems of the (unknown) Galois group.
Thus the non-existence of subfields proves the group to be primitive.

Assume we have a non-trivial subfield $F = \Q(\beta)$
for $\beta = \sum r_\ell\alpha_1^\ell$. Then $B_1 := \{\alpha_j \mid
  \sum r_\ell\alpha_i^\ell = \sum r_\ell \alpha_1^\ell\}$ is the block
containing $\alpha_1$, the other blocks are computed similarly.
From here, it is trivial to compute the wreath product $W_F$ corresponding 
to this block system, and $\Gal(f) \subseteq W_F$. The construction
can be improved if we compute the Galois group of $F/\Q$ (and even more if we 
compute the group of $F/E$ - but this is too expensive in practise).
Doing this for all subfields, we compute a suitable starting group.

If there are no subfields, hence $\Gal(f)$ is primitive, we can try
to obtain a good starting group from factoring suitable
resolvent polynomials as indicated in Theorem \ref{genStaud} and Examples \ref{largefactor} and \ref{largefactor2}.

\subsection{Stauduhar}\label{step3}
We assume now that we have some $\Gal(f)\leq G\leq \Sym n$. 
The next step is now to either prove that $G$ is the Galois group
or replace it by some smaller group $H$.
Now let $H<G$ be maximal. If we have subfields, we should also
verify that $H$ admits the same block systems as $G$, otherwise it cannot
contain the Galois group. In the following we would like to apply Corollary \ref{StaudLin}.

We now find a $G$-relative $H$-invariant $F$ using any of the methods
above, typically starting with the special invariants in Section \ref{special} and, failig that, using \ref{generic}. Next, we verify that 
\begin{equation}
\label{sqf}
F^\sigma(\underline\alpha) = F^\tau(\underline \alpha)\text{ if and only if } \sigma\tau^{-1}\in H
\end{equation}
and compute
$$C := \{ \sigma\in G//H \mid F^\sigma (\underline\alpha)\in \Z\}.$$
If $C$ is non-empty, then $G := \cap_{\sigma\in C} H^\sigma$ will be our
new group.

\begin{remark}\label{short-coset}
Obviously, if $(G:H)$ is large, this is going to be very inefficient, if not
impossible. If we have knowledge of some non-trivial element $\tau\in \Gal(f)$,
coming from some known automorphism of $K$, then we can aid the computation of 
$C$. Instead of $G//H$ we only compute
$$G//_\tau H := \{\sigma\in G//H \mid \tau \in H^\sigma\}$$
the so called short-coset. The actual computation of $G//_\tau H$ can
be performed even if $(G:H)$ is too large to be computed \cite[4.6]{GeKl}.
\end{remark}

However, we do not know how to test \eqref{sqf} effectively. All we do here
is to apply some probabilistic test, i.e. test for some 100 cosets if the
images differ and rely on an independent proof later to justify
the result.

\subsection{Splitting Field}
Now that we have looked at the components of the algorithm, we can discuss
the splitting field. As we saw, we need to be able to quickly evaluate 
$F(\underline \alpha)$, decide if two such evaluations are different and,
finally, test if $F(\underline \alpha)\in \Z$.
All of those tasks would be trivial if we could use an purely algebraic, exact
representation of a splitting field $K$. However, since $[K:\Q] \geq \#\Gal(f)$
this is in general not practical. Using $K=\C$ as Stauduhar did is possible, but
makes it difficult to decide if $F(\underline\alpha)\in \Z$, this would involve
a careful analysis of the numerical properties of $F$.
By restricting the invariants to be free of division, and using a suitable
$p$-adic field $K$, we can overcome most problems, although we actually
need both complex and $p$-adic information. We choose a suitable prime $p$ and
compute a finite extension $K$ of $\Q_p$. The complex information is
used to derive the $p$-adic precision necessary to guarantee correctness.
Let $0<M$ be such that $|\alpha_i|\leq M$ for all complex roots $\alpha_i$. 
It is now easy to compute
$N$ such that $|F^\sigma(\underline\alpha)|\leq N$ - for all $\sigma$ as we
cannot align the ordering of complex and $p$-adic roots.
Thus using a $p$-adic precision $k$ such that $p^k>2N$ means that we
can easily find (the unique) $\theta\in \Z$ such that 
$F(\underline\alpha) = \theta\bmod p^k$ and $|\theta|<N$.

Proving that $F(\underline\alpha) = \theta$ is equivalent to showing that 
$R_F(\theta) = 0$. Since $R_F\in \Z[t]$ and $\theta\in \Z$, we have $R_F(\theta)\in \Z$.
From $F(\underline\alpha) = \theta\bmod p^k$ we get $p^k | R_F(\theta)$, while
on the other hand $|R_F(\theta)|\le (|\theta|+N)^{(G:H)}$, so either
$R_F(\theta)=0$ or
\begin{equation}\label{eq:Prec}p^k\le (|\theta|+N)^{(G:H)}.\end{equation}
so we can easily compute
$k$ large enough to prove $R_F(\theta)=0$.
Unfortunately, $k = O(G:H)$, so $k$ is too large to be useful in general.
Similarly to the use of short cosets (see Remark \ref{short-coset}) we apply a hybrid approach.
We choose $k$ large enough to {\em find} $\theta$, i.e. $k = O(\log N)$
and rely on a final proof step to verify the computation.

\subsection{Checking unproven steps}\label{check}
In the case that $(G:H)$ is huge we have two problems when we apply
Corollary \ref{StaudLin}.  On the one hand the set $G//H$ of coset
representatives of $G/H$ is too big and on the other hand the needed
$p$-adic precision depends exponentially on the index $(G:H)$, see
\eqref{eq:Prec}.  In our actual implementation we only consider the
short cosets in Remark \ref{short-coset} and we replace the exponent
in \eqref{eq:Prec} by a small number like 10. Using these two
modifications we are able to do the corresponding computations for the
Stauduhar step.  In order to get a mathematical proof for our
computations we have two problems.  Firstly, we apply Corollary
\ref{StaudLin}, but we cannot check if the resolvent polynomial $R_F$
is squarefree by only considering some of its roots. Secondly, by only
using exponent 10 instead of $(G:H)$ we cannot prove, that $\theta$ is a
rational integer. In both cases the probability that we are wrong is small,
but this does not give a mathematical justification that $\Gal(f)$ is contained in a conjugate
of $H$. In order to get this we change the method and use Theorem \ref{genStaud} for larger
degree factors $A$ (and a different $H$).

For simplicity we assume we have a subgroup chain $G =: H_0 > H_1 > \ldots > H_r$
as a result of the algorithm with only one (the first) step being unproven. 
We expect that $\Gal(f)= H_r$, but strictly,
at this point we only have the following two facts:
$\Gal(f)\subseteq G = H_0$ and if $\Gal(f)\subseteq H_1$, then $\Gal(f) = H_r$.
Typically, this is the result of $(H_0:H_1)$ being too large to 
verify the resolvent to be squarefree or the derived precision being too
large to verify the rationality of the root.
The correctness of the other steps is depending on the correctness
of the first step. The case of several unproven steps, due to several large indices
can be handled analogously.

We proceed as follows. We try to find a subgroup $U$ of $G$ such that $H_r$ acts intransitively
on the cosets of $G/U$. Then we compute the resolvent polynomial $R_F$ for the group pair $U\leq G$
and some $G$-relative $U$-invariant $F$. If $R_F$ is squarefree and reducible then this already
proves that $\Gal(f)$ is a proper subgroup of $G$. Since we expect that the Galois group is $H_r$
we can easily compute the elements $\sigma_1,\ldots,\sigma_m\in G$ which give the factor
$A$ in Theorem \ref{genStaud}. Therefore we can easily compute an approximation of the expected factor, hence, by rounding, the expected factor in $\Z$.
Finally, we use exact trial division and then descent to a smaller subgroup of $G$ by using this theorem.
When the stabilizer is $H_i$ for some $1\leq i\leq r$, 
our computation is finished successfully, otherwise we replace $G$
by the stabilizer and restart this step. 

It might be difficult to find a good subgroup $U$. Good candidates are
intransitive subgroups of $G$, see \cite[Section 5]{GeKl}. In Example
\ref{largefactor2} we used a polynomial acting on 4-sets. These
$r$-set polynomials have the advantage that we can compute them
quickly symbolically.

\subsection{Overall Algorithm}
To quickly summarise the overall algorithm for irreducible integral polynomials
$f$: we start be factoring $f$ modulo several primes $p$ in order to find
a prime such that he least common multiple of the degrees of the factors
is not too large (and not too small) and that $f$ remains squarefree.
Fixing such a prime to then compute approximations to the roots in the 
$p$-adic field as well as the permutation of the roots corresponding to the
Frobenius of the $p$-adic field. Furthermore, if the Galois group is
$\Sym n$ or $\Alt n$, this too is typically detected just from the 
degrees of the modulo $p$-factors. Finally, approximations to the complex
roots of $f$ are obtained as well.

The next step, as outlined above, is to derive a suitable starting group
for the Stauduhar iteration. Here, we compute all subfields of the
stem field $E=\Q[x]/f$ of $f$, the corresponding block systems and the
largest transitive group $G$ admitting those blocks.

The third step is the iteration of the Stauduhar test \ref{step3} above.
As a result, have a chain of subgroups with the properties described
in \ref{check} above, hence we apply those techniques to verify the result.

\subsection{Reducible Polynomials}
Most of the outlined method applies to reducible polynomials as well, the
key difference is that the groups occurring are naturally intransitive, which
excludes most of the special invariants.
Let $f  = \prod f_i$ be squarefree and monic. We start by fixing a common
splitting field $K$, compute the roots of $f$ and compute the Galois groups
$G_i = \Gal(f_i)\subset \Sym {\underline\alpha}$.
Galois theory now states that $\Gal f < \prod G_i$, so we re-start the
algorithm above with $f$ and $G := \prod G_i$ as a starting group.
We note that only subgroups $H<G$ need to be investigated that project
onto the full Galois groups of the factors, i.e. the final group is
a subdirect product of the $G_i$.

\subsection{Other fields}
Most of the abstract theory described in this paper applies for all infinite 
ground fields as well, hence the algorithm carries over to different
applications. However, there are a few remarks in order: the actual performance
of the overall algorithm depends critically on the splitting field chosen.
In the case of $p$-adic fields, this is a well studied situation with
excellent algorithms already known. In the case of Laurent-series over
finite fields, occuring naturally as splitting fields for polynomials
in $f \in \F_q(t)[x]$, the situation is similar. However, in more general
fields, good descriptions of possible splitting fields are not quite that
easily obtained. In particular, apart from efficiency considerations, 
the test for ``rationality of the resolvent root'' needs to be adapted.
Finally, it should be noted that several of the ``special invariant'' listed
above will not work in small characteristic (in particular in characteristic $2$),
hence the efficiency of the method is endangered.

\section{Numerical Results}
In order to test the procedure outlined in this paper, we applied it
to the complete contents of a database of polynomials \cite{KlMa2} with known 
Galois groups (http://galoisdb.math.upb.de).
This database contains explicit examples, sometimes many,
for most groups of degree $\le 23$. For more than $10^6$ polynomials,
a total of 4835 different Galois groups have been computed. In this range, for 4624
groups the average runtime was less than 5 seconds. Only 5 groups took
more than 30 seconds to compute.

Let us look at an explicit example in detail. Let
$$ f := x^{20} - 308x^{16} + 33396x^{12} - 1554608x^8 + 28579232x^4 - 113379904$$
with Galois group $20T684$ of order 61440. We start by factoring $f$ modulo
several small primes to select $p=89$ for our splitting field which is an
unramified cubic extension of $\Q_{89}$. Next, subfields are computed,
and we recurse by computing the Galois group of the degree 10 subfield
first. Using the subfield data, we conclude that  the Galois group of
$f$ is a subgroup of $20T992$ of order $2^{17}\cdot 3\cdot 5$.
This group has 6 maximal transitive subgroups, of which only one
is a candidate for the target groups, the others can be excluded
by block systems or intersections with other known groups. The only 
group to test further is isomorphic to $20T807$ of index $2^4$, for this pair 
of groups we construct a special invariant using \ref{BlockQuotients}.
The group $20T807$ now has 8 maximal transitive subgroups, 2 of which we
need to test further. For both subgroups, both
isomorphic to $20T684$ of index 2, our algorithm fails to find
special invariants, thus uses the generic ones from section \ref{generic}.
Unfortunately, on evaluation of those invariants, we detect
duplicate values, hence have to resort to Tschirnhaus transformation.
In this example, we end up trying up to 10 different transformations
of degree up to 7 before we find one to remove the duplicate values, hence makes
the resultant squarefree and a descent is found. The resulting 
group again has 4 maximal transitive subgroups, none of which however
are possible, thus the computation terminates.
The ``long'' runtime here is a result of the generic invariants on the
one hand and the need for Tschirnhaus transformations on the other.
By construction, the generic invariants chosen are of minimal degree
but need $>500,000$ multiplications for a single evaluation.
Due partly to the Tschirnhaus transformations, a $p$-adic precision
of $>60$ digits is used which then explains the runtime.

Comparing this to other polynomials with the same group, we see that the
runtime varies substantially (20 - 240 seconds) which is due to the number
of Tschirnhaus transformations used: this depends on the polynomials and
not (directly) on the group. In this example, the ``nice'' structure 
of the polynomial with lots of zero-coefficients indirectly causes the
transformations, while we could ``easily'' fix this by a transformation of the
original polynomial, this would also incur a drastic growth of the 
coefficients, thus rendering this mostly useless.

Overall, the runtime can be seen to depend mainly on the groups as this
determines the invariants and the descent tree transversed. Long runtimes
typically are the result of bad invariants (generic invariants, frequently
if the groups are very similar, i.e. small index). Large index subgroups,
while posing a potential problem for the verification, are frequently
easy to compute with: the short cosets reduce the number
of candidates dramatically and the vastly differing groups make finding
of invariants easy.

\section{Future Work}
The algorithm, as presented here, has two major weaknesses: it needs
to find ``good'' invariants and it ``needs'' a small index in order to 
have verifiable results. Thus more work is needed to increase the number
of ``special'' invariants. In fact, work in this direction has already commenced
e.g. Elsenhans \cite{els} found better invariants for pairs of intransitive
groups and for certain (large) pairs of 2-groups.
In order to address the verification problem,
maybe the use of non-linear factors of the resolvent polynomials
as demonstrated in \ref{largefactor} should be investigated further.

However, as of now, we have a degree independent complete algorithm to
compute Galois groups of univariate polynomials. The algorithm is very
efficient and has been used on polynomials of degree $>100$ already.


\begin{thebibliography}{1}

\bibitem{TheFrenchGirl}
I.~Abdeljaouad.
\newblock Calculs d'invariants primitifs de groupes finis,
{\em Theor. Inform. Appl.}  \bgroup \bf 33\egroup ~ (1999),~59--77.

\bibitem{BrMi}
M.~Bridson and C.~Miller.
\newblock Structure and finiteness properties of subdirect products of
   groups,
   {\em Proc. Lond. Math. Soc. (3)} \bgroup \bf 90 \egroup ~ (2009),
   631--651.

\bibitem{McKay}
 D.~Casperson and J.~McKay. 
 \newblock
 Symmetric functions, m-sets, and Galois groups,
 {\em Math. Comput.} \bgroup \bf 63\egroup ~ (1994).


\bibitem{Cohen1}
H.~Cohen, {\em  A course in computational algebraic number theory}.
  Springer, Berlin, 1993.


\bibitem{hulpke}
\newblock On transitive permutation groups,
John H. Conway, Alexander Hulpke and John McKay,
{\em LMS J. Computation and Mathematics}, \bgroup \bf 1\egroup (1998), 1--8.

\bibitem{kemper}
H.~Derksen and G.~Kemper.
\newblock
Computational invariant theory,
{\em Encyclopaedia of Mathematical Sciences. Invariant Theory and
    Algebraic Transformation Groups},
\newblock Springer, Berlin, 2002.


\bibitem{DiMo}
J.~Dixon and B.~Mortimer.
\newblock {\em Permutation groups}.
\newblock Springer, Berlin-Heidelberg-New York, 1996.


\bibitem{DoDo}
T.~Dokchitser and V.~Dokchitser 
{\em Identifying Frobenius elements in Galois groups},
to appear in Algebra and Number Theory (2013).


\bibitem{Eic}
Y. Eichenlaub.
\newblock \bgroup \em Probl\`emes effectifs de th\'eorie de {G}alois en
  degr\'es 8 \`a 11\egroup.
\newblock Th\`ese, Universit\'e Bordeaux 1, 1996.


\bibitem{els}
A.-S. Elsenhans. 
\newblock \bgroup \em Invariants for the Computation of intransitive and transitive Galois Groups\egroup. {\em J. Symb. Comput.} {\bf 47} ~ (2012), 315--326. 

\bibitem{Gei2}
K.~Gei{\ss}ler.
\newblock {\em {Berechnung von Galoisgruppen \"uber Zahl- und
  Funktionenk\"orpern}}.
\newblock PhD thesis, Technische Universit\"at Berlin, 2003.

\bibitem{GeKl}
K.~Gei{\ss}ler and J.~Kl{\"u}ners.
\newblock Galois group computation for rational polynomials, {\em J. Symb.
  Comput.} \bgroup \bf 30\egroup ~ (2000),~653--674.

\bibitem{girstmair2}
K.~Girstmair.
\newblock 
On the computation of resolvents and Galois groups,
{\em Manuscripta Math.} \bgroup \bf 43\egroup ~ (1983),~289--307.


\bibitem{girstmair}
K.~Girstmair.
\newblock 
On invariant polynomials and their application in field theory,
{\em Math. Comput.} \bgroup \bf 48\egroup ~ (1987),~781--797.


\bibitem{issac2011}
M. van Hoeij, J.~Kl{\"u}ners  and A. Novocin.
\newblock 
Generating Subfields,
{\em J.Symb.Comput.}, \bgroup \bf 52\egroup ~ (2013), 17--34.

\bibitem{KemperSteel}
G.~Kemper and A.~Steel.
\newblock
Some Algorithms in Invariant Theory of Finite Groups,
in {\em Computational Methods for Representations of Groups
                    and Algebras, Euroconference in Essen, April 1-5 1997},
edited by P.~Dr{\"a}xler, G.O.~Michler and C.M.~Ringel,
Birkh{\"a}user, Progress in Mathematics,173, 1997.

\bibitem{diss_JK}
J.~Kl{\"u}ners.
\newblock 
{\"U}ber die Berechnung von Automorphismen und Teilk\"orpern algebraischer Zahlk\"orper,
{\em PhD TU-Berlin} ~ (1997).



\bibitem{KlMa}
J.~Kl{\"u}ners and G.~Malle.
\newblock 
Explicit Galois realization of transitive groups of degree up to 15,
{\em J. Symb. Comput.} \bgroup \bf 30\egroup ~ (2000),~675--716.


\bibitem{KlMa2}
J.~Kl{\"u}ners and G.~Malle.
\newblock 
A database for field extensions of the rationals,
{\em LMS J. Comput. Math.},\bgroup \bf 4\egroup ~ (2001),~182--196. 


\bibitem{loos}
R\"udiger Loos. 
\newblock
Computing in algebraic extensions, Symbolic and Algebraic Computation,
Springer, Wien, 1982, pp. 173--187.


\bibitem{schmalz}
B.~Schmalz.
\newblock Verwendung von Untergruppenleitern zur Bestimmung von 
Doppelnebenklassen,
{\em Bayreuther Mathematische Schriften} 
\bgroup \bf 31\egroup ~ (1990),~109--143.

\bibitem{soicher}
  L.~Soicher.
  \newblock
  The Computation of Galois Groups, {\em M.Comp.Sci. Thesis}, Concordia University, Montreal, 1981.
  {\tt http://www.maths.qmul.ac.uk/\~ leonard/mcompsc\_soicher.pdf}

\bibitem{Stauduhar}
R.~Stauduhar.
\newblock The Determination of Galois Groups,
{\em Math.Comput.} \bgroup \bf 27 \egroup (1973), 981--996.

\bibitem{yokoyama}
K.~Yokoyama and G.~Renault.
\newblock A Modular Method for Computing the Splitting Field of a Polynomial,
{\em ANTS 2006}, Springer LNCS 4076, 124--140.
\end{thebibliography}
\end{document}